\providecommand{\U}[1]{\protect\rule{.1in}{.1in}}
\theoremstyle{plain}
\newtheorem{thm}{Theorem}[section]
\newtheorem{lem}{Lemma}[section]
\newtheorem{prop}{Proposition}[section]
\newtheorem{definition}[lem]{Definition}
\newtheorem{rem}{Remark}[section]
\newcommand{\pr}{\prime}
\newcommand{\R}{\mathbb{R}}
\newcommand{\E}{\mathbb{E}}
\newcommand{\PP}{\mathbb{P}}
\newcommand{\F}{\mathcal{F}}
\newcommand{\lt}{\left}
\newcommand{\rt}{\right}
\begin{document}

\title{Mean-field optimal control problem of SDDEs driven by fractional Brownian motion}
\author{ Soukaina Douissi \thanks{ Laboratory LIBMA, Faculty
Semlalia, University Cadi Ayyad, Marrakech, Morocco. Email: \texttt{douissi.soukaina@gmail.com}.}, Astrid Hilbert \thanks{Department of Mathematics, Linnaeus University, Sweden. Email: \texttt{astrid.hilbert@lnu.se}.}, Nacira Agram \thanks{Department of Mathematics, University of Oslo, P.O. Box
1053 Blindern, N--0316 Oslo, Norway. Email: \texttt{naciraa@math.uio.no.}}}

\maketitle

\begin{abstract}
We consider a mean-field optimal control problem for stochastic differential equations with delay driven by fractional Brownian motion with Hurst parameter greater than one half. Stochastic optimal control problems driven by fractional Brownian motion can not be studied using classical methods, because the fractional Brownian motion is neither a Markov pocess nor a semi-martingale. However, using the fractional White noise calculus combined with some special tools related to the differentiation for functions of measures, we establish and proove necessary and sufficient stochastic maximum principles. To illustrate our study, we consider two applications: we solve a problem of optimal consumption from a cash flow  with delay and a linear-quadratique (LQ) problem with delay.
\end{abstract}

\vspace{0.5cm}
\textbf{Keywords} {Mean-Field, Stochastic Delayed Differential Equations, Fractional Brownian Motion,  Stochastic Maximum Principles.}\\

\textbf{Mathematics Subject Classification}\text{ \ }60G22. 60H07. 60H04. 93E20
\section{Introduction}
The interest for stochastic delayed differential equations is constantly increasing. They are frequently used to model the evolution of systems with past-dependence nature. Such systems usually appear in biology, engineering and mathematical finance.\\
There is a rich litterature on stochastic optimal problems with delay. A lot of authors studied both the case where the stochastic systems are driven by a classical Brownian motion as well as where there is jumps, see,e.g., \cite{OS,OSZ,CW,GM}.
\\
Stochastic control problems driven by fractional Brownian motion (fBm) were also studied by many authors, see,e.g., \cite{HOB,BLPR,HZ,WCH}. However, compared with the papers on stochastic control problems driven by the classical Brownian motion, few has been done because classical methods to solve control problems can not be used direclty, since the fractional Brownian motion is not a semi-martingale and not a Markov process. \\
Mean-field problems have also attracted wide attention recently, due to their several applications in physics, economics, finance and stochastic games. Mean-field games were first studied by Lasry and Lions \cite{LasryLions}. Buckdhan, Li and Peng \cite{BLP} studied a special mean-field games and introduced the so-called mean-field backward stochastic differential equations. Later, Carmona and Delarue \cite{carmona2} studied a class of mean-field forward-backward stochastic differential equations and gave many applications.\\
In this paper, all the previous fields are combined to study the optimal control problem of mean-field stochastic differential equations driven by fractional Brownian motion. The dynamic of the controlled state process depends on the state, the control, their laws but also on their values at previous time instants. \\
The dynamics of this work are close to those in the paper of Qiuxi Wang, Feng Chen and Fushan Huang \cite{WCH}. In \cite{WCH}, the adjoint equation is an anticipated backward stochastic diferential equation (ABSDE) driven by both a fBm and a standard Brownian motion and the integral with respect to the fBm, is defined in the Stratonovich sense. \\
This work is also inspired by the recent paper of Buckdhan Rainer and Shuai Jing \cite{BLPR}, if the system has a past-dependence feature. In \cite{BLPR} the dynamic of the adjoint process is driven by a standard Brownian motion, here the anticipated BSDE is driven by a fBM.
\\
The approach used here is an extension of the work of Biagini, Hu, \O ksendal and Sulem \cite{HOB} when the dynamic of the system is of delayed mean-field type. \\
In our paper, we establish and proove necessary and sufficient maximum principles and  we illustrate our study by solving two optimal control problems : a mean-field optimal consumption problem from a cash flow with delay and a linear-quadratique (LQ) problem with delay.\\
We present now more specifically the general problem we consider:

\section{Statement of the problem}
Let $B^{H}$ be a fractional Brownian motion on a filtred probability space $(\Omega, \mathcal{F}, \mathbb{F} = (\mathcal{F}_{t})_{t \geq 0}, \mathbb{P})$. \\
We consider a mean-field controlled stochastic delay equation of the form:
\begin{equation*}
\begin{array}
[c]{lll}%
dX(t) & = & b(t,X(t),X(t-\delta),M(t),M(t-\delta),u(t))dt +\sigma(t,M(t),M(t-\delta))dB^{H}_{t},\text{ }t\in\left[  0,T\right]
,\text{\ }\\
X(t) & = & x_{0}(t);\text{ \ }t\in\left[  -\delta,0\right]  ,
\end{array}  \label{eq:1}%
\end{equation*}
where 

\begin{equation*}
M(t):=\mathbb{P}_{X(t)}, \text{ \ } M(t-\delta):=\mathbb{P}_{X(t-\delta)}
\end{equation*}  

and $T > 0$, $\delta > 0$ are given constants. Here
\vspace{0.2cm}
\begin{equation*}
b: \Omega \times [0,T] \times \mathbb{R} \times \mathbb{R} \times \mathcal{P}_{2}(\mathbb{R})\times\mathcal{P}_{2}(\mathbb{R}) \times \mathcal{U} \rightarrow \mathbb{R},
\end{equation*}
\begin{equation*}
\sigma : \left[  0,T\right]  \times \mathcal{P}_{2}(\mathbb{R})\times\mathcal{P}_{2}(\mathbb{R}) \rightarrow \mathbb{R},
\end{equation*}

\vspace{0.2cm}
are given functions such that, for all $t \in [0,T]$, $b(.,t,x,\bar{x},m,\bar{m},u)$ is supposed to be $\mathcal{F}_{t}$- measurable for all $x,\bar{x} \in \mathbb{R}$, $ u \in \mathcal{U}$, $m, \bar{m} \in \mathcal{P}_{2}(\mathbb{R})$. The function $\sigma$ is assumed to be deterministic such that its integral with respect to the fBm will be a Wiener type integral. $\mathcal{P}_{2}(\mathbb{R})$ denotes the space of all probability measures $m$ on $(\mathbb{R}, \mathcal{B}(\mathbb{R}))$, such that $\int_{\mathbb{R}} |x|^{2}m(dx) < \infty$. \\
\\
The function $x_{0}$ is assumed to be continuous and deterministic. The set $\mathcal{U}\subset%
\mathbb{R}
$ consists of the admissible control values. The information available to the
controller is given by the filtration $\mathbb{F}$ generated by the  fBm $B^{H}$. The set of admissible controls denoted by $\mathcal{A}_{\mathbb{F}}$ are the strategies available
to the controller, required to be  $\mathcal{U}$-valued and $\mathbb{F}$-adapted processes. Through the paper, we assume  that $X$ exists and belongs to $L^{2}(\Omega \times [0,T])$.
\\
\\
The performance functional is assumed to have the form 
\begin{equation*}%
\begin{array}
[c]{lll}%
J(u) & = & \mathbb{E[}g(X(T),M(T))+%
{\textstyle\int_{0}^{T}}
f(t,X(t),X(t-\delta),M(t),M(t-\delta),u(t))dt]
\end{array}
, \label{P}%
\end{equation*}
where $f:\Omega\times\left[
0,T\right]  \times%
\mathbb{R}
\times \mathbb{R} \times \mathcal{P}_{2}(\mathbb{R})\times\mathcal{P}_{2}(\mathbb{R}) \times\mathcal{U}\rightarrow%
\mathbb{R}
$ and $g:\Omega  \times%
\mathbb{R}
\times\mathcal{P}_{2}(\mathbb{R})\rightarrow%
\mathbb{R}
$ are given functions, such that for all $ t \in [0,T]$, $f(.,t,x,\bar{x},m,\bar{m},u)$ is assumed to be $\mathcal{F}_{t}$-measurable for all $x,\bar{x} \in \mathbb{R}$, $u \in \mathcal{U}$,  $m,\bar{m} \in \mathcal{P}_{2}(\mathbb{R})$. The function $g(.,x,m)$ is assumed to be $\mathcal{F}_{T}$-measurable for all $x \in \mathbb{R}$, $m \in \mathcal{P}_{2}(\mathbb{R})$.

\vspace{0.3cm}
We also assume the following integrability condition
\begin{equation*}%
\begin{array}
[c]{lll}%
 \mathbb{E[}|g(X(T),M(T))|+%
{\textstyle\int_{0}^{T}}
|f(t,X(t),X(t-\delta),M(t),M(t-\delta),u(t))|dt] < +\infty.
\end{array}
, \label{PP}%
\end{equation*}
The functions $\sigma$, $b$, $f$ and $g$ are assumed to be continuously differentiable with respect to $x, \bar{x},u$ with bounded
derivatives and admit Fr\'{e}chet bounded derivatives with respect to $m, \bar{m}$.\\

The problem we consider in this paper  is the following :\\
\\
\textbf{Problem:} Find a control $u^{*} \in \mathcal{A}_{\mathbb{F}}$ such that 
\begin{equation}
\label{eq:optimal perf}
J(u^*)=\sup_{u\in \mathcal{A}_{\mathbb{F}}}J(u).
\end{equation}
any control $u^{*} \in \mathcal{A}_{\mathbb{F}}$ satisfying (\ref{eq:optimal perf}) is called an optimal control. 
\section{Generalities}
In this section we give some preliminaries concerning fractional Brownian motion based on fractional White noise calculus and some generalities on differentiability with respect to the measures. For a general introduction to fractional White noise theory the reader may consult the books \cite{BHOZ,HXO}. 

\subsection{Fractional Brownian motion}

Let $T > 0$  be a finite time horizon and $\Omega$ be the space $\mathcal{S}^{{\prime}}([0,T])$
of tempered distributions on $[0,T]$, which is the dual of the Schwartz
space $\mathcal{S}([0,T])$ of rapidly decreasing smooth functions on
$[0,T]$. \\
\\
For $  1/2 < H < 1$. We put 
\begin{equation*}
\varphi_{H}(t,s) = H(2H-1)|t-s|^{2H-2}, \hspace{0.5cm} s,t \in [0,T].
\end{equation*}
\\
If $\omega\in\mathcal{S}^{{\prime}}([0,T])$ and
$f\in\mathcal{S}([0,T])$ , we let $<\omega,f>=\omega(f)$ denote the
action of $\omega$ applied to $f$, it can be extended to $f: [0,T]%
\rightarrow\mathbb{R}$ such that%

\[
\Vert f\Vert_{{H}}^{2}:=%
{\textstyle\int_{0}^{T}}
{\textstyle\int_{0}^{T}}
f(s)f(t)\varphi_{H}(t,s)dsdt<+\infty.
\]

\vspace{0.3cm}
The space of all such functions $f$ is denoted by $\mathbb{L}_{H}%
^{2}([0,T])$. The map $f \mapsto \exp(- 1/2 \Vert f\Vert_{{H}}^{2})$ with $f \in \mathcal{S}([0,T])$, is positive definite on $\mathcal{S}([0,T])$, then by Bochner-Minlos theorem there exists a probability measure $\mathbb{P}^{H}= \mathbb{P}$ on the Borel subsets $\mathcal{B}(\Omega)$ such that 
\begin{equation}\label{BM}
\int_{\Omega} e^{i <\omega,f>} d\mathbb{P}(\omega) = e^{-1/2 \Vert f\Vert_{{H}}^{2}} \hspace{1cm} \forall f \in \mathcal{S}([0,T])
\end{equation}
It follows from (\ref{BM}) that
\begin{equation}\label{3..6}
\mathbb{E}[<.,f>] = 0 \hspace{0.2cm} \text{and} \hspace{0.2cm} \mathbb{E}[<.,f>^{2}] = \Vert f\Vert_{{H}}^{2} 
\end{equation}

where $\mathbb{E}$ denotes the expectation under the probability measure $\mathbb{P}$. Hence, if we put
\begin{equation*}
B^{H}_{t} := <\omega, \chi_{[0,t]}(.)>
\end{equation*}
then using (\ref{3..6}), $B^{H}$ is a fractional Brownian motion with Hurst parameter $H$, that is a centred Gaussian process with covariance function 
\begin{equation*}
\mathbb{E}[B^{H}_{t}B^{H}_{s}] = \frac{1}{2}(t^{2H} + s^{2H} - |t-s|^{2H}), \hspace{0.2cm} s,t \in [0,T]
\end{equation*}
From now on, we endow $\Omega$ with the natural filtration $\mathbb{F}:= \{\mathcal{F}_{t}\}_{t \in [0,T]}$ generated by $B^{H}$.
\begin{lem} If $f,g$ belong to $\mathbb{L}_{H}%
^{2}([0,T])$, then the Wiener integrals $\int_{0}^{T} f_{s}dB^{H}_{s}$ and $\int_{0}^{T} g_{s} dB^{H}_{s}$ are well defined as zero mean Gaussian random variables with variances $\Vert f\Vert_{{H}}^{2}$  and $\Vert g\Vert_{{H}}^{2}$ respectively and 
\begin{equation*}
\mathbb{E}[\int_{0}^{T} f_{s}dB^{H}_{s}\int_{0}^{T} g_{s} dB^{H}_{s}] = \int_{0}^{T} \int_{0}^{T} f(s)g(t)\varphi_{H}(t,s) dt ds
\end{equation*}
\end{lem}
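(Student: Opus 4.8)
The plan is to build the integral by the classical isometry-and-density method, with formula (\ref{3..6}) supplying the crucial mean and variance identities. First I would consider an elementary (step) function $f=\sum_{i=0}^{n-1}a_i\chi_{[t_i,t_{i+1})}$ associated to a partition $0=t_0<t_1<\dots<t_n=T$, and define its Wiener integral in the natural way by $\int_0^T f_s\,dB^H_s:=\sum_{i=0}^{n-1}a_i\big(B^H_{t_{i+1}}-B^H_{t_i}\big)$. Since $B^H_t=\langle\omega,\chi_{[0,t]}\rangle$ and the pairing is linear in its second argument, this is exactly $\langle\omega,f\rangle$, so on elementary functions the integral coincides with the action already appearing in (\ref{3..6}).

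From (\ref{3..6}), which holds for $f\in\mathcal{S}([0,T])$ and extends by continuity to the closure in which each $\chi_{[0,t]}$, and hence every elementary function, lives, I would read off that $f\mapsto\int_0^T f_s\,dB^H_s=\langle\cdot,f\rangle$ is a linear isometry from the elementary functions, normed by $\|\cdot\|_{H}$, into $L^2(\Omega,\mathbb{P})$: the image is centred with variance $\|f\|_{H}^2$. Because the elementary functions are dense in $\mathbb{L}_H^2([0,T])$ for $\|\cdot\|_{H}$, this isometry extends uniquely by continuity. Concretely, for $f\in\mathbb{L}_H^2([0,T])$ I pick elementary $f_n$ with $\|f_n-f\|_{H}\to 0$; the isometry forces $(\langle\cdot,f_n\rangle)_n$ to be Cauchy in the complete space $L^2(\Omega)$, and I set $\int_0^T f_s\,dB^H_s$ equal to its limit, which is independent of the approximating sequence. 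Passing to the limit preserves mean $0$ and variance $\|f\|_{H}^2$, and since an $L^2$-limit of centred Gaussian variables is again centred Gaussian, the integral is a zero-mean Gaussian random variable with variance $\|f\|_{H}^2$. Applying this to both $f$ and $g$ yields the first assertion.

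For the covariance I would polarize. Denote the symmetric bilinear form $\langle f,g\rangle_{H}:=\int_0^T\int_0^T f(s)g(t)\varphi_H(t,s)\,dt\,ds$, which satisfies $\langle f,f\rangle_{H}=\|f\|_{H}^2$. Using its symmetry together with linearity of the integral in its integrand, I obtain
\[
\mathbb{E}\Big[\int_0^T f_s\,dB^H_s\int_0^T g_s\,dB^H_s\Big]=\tfrac12\big(\|f+g\|_{H}^2-\|f\|_{H}^2-\|g\|_{H}^2\big)=\langle f,g\rangle_{H},
\]
which is precisely the claimed identity. This holds first for elementary $f,g$ and then for general $f,g\in\mathbb{L}_H^2([0,T])$ by the same $L^2$-limit, using that both sides are continuous in $\|\cdot\|_{H}$.

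The step I expect to be the main obstacle is the density claim: verifying that the elementary functions (equivalently the Schwartz functions, on which (\ref{3..6}) is literally stated) are $\|\cdot\|_{H}$-dense in $\mathbb{L}_H^2([0,T])$, and checking that the inner-product space $\big(\mathbb{L}_H^2([0,T]),\langle\cdot,\cdot\rangle_{H}\big)$ is handled correctly, since it is a classical fact that this space need not be complete. One must therefore be careful that the limit defining the integral genuinely exists in $L^2(\Omega)$ and does not depend on the chosen approximation; completeness of $L^2(\Omega)$, rather than of $\mathbb{L}_H^2$, is what makes this work. Once density is in hand, the remaining isometry and polarization arguments are routine.
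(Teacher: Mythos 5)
The paper does not actually prove this lemma: it is stated as standard background on the Wiener integral with respect to fBm, implicitly deferred to the references \cite{BHOZ,HXO}, so there is no in-paper argument to compare against. Your construction is the standard one and is correct: on elementary functions the integral coincides with the pairing $\langle\omega,f\rangle$, formula (\ref{3..6}) (extended from $\mathcal{S}([0,T])$ by $\|\cdot\|_H$-continuity) gives the mean-zero Gaussian law with variance $\|f\|_H^2$, the map extends isometrically to the $\|\cdot\|_H$-closure because $L^2(\Omega)$ is complete, Gaussianity survives the $L^2$-limit, and polarization of $\|\cdot\|_H^2$ yields the covariance identity. You have also correctly isolated the one genuinely delicate point: for $H>1/2$ the inner-product space $\bigl(\mathbb{L}_H^2([0,T]),\langle\cdot,\cdot\rangle_H\bigr)$ is not complete, so the extension lives on the closure of the elementary functions inside $L^2(\Omega)$ rather than on a completion of the integrand space, and the density of elementary (equivalently Schwartz) functions in $\mathbb{L}_H^2([0,T])$ for $\|\cdot\|_H$ is a nontrivial fact that must be quoted rather than taken for granted (it is established in the literature on fBm integrands, e.g.\ in \cite{BHOZ} and in Pipiras--Taqqu-type results). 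With that citation supplied, your argument is complete; without it, the only gap is precisely the one you flagged.
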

We denote by $\tilde{\mathcal{S}}$ the set of all polynomial functions of $B^H(\psi_j)=\int^T_0\psi_j(t)d B^H(t)$. For an element $F\in \tilde{\mathcal{S}}$, having the form 
$$
F=g(B^H(\psi_1), \cdots, B^H(\psi_n)), 
$$
where $g$ is a polynomial function of $n$ variables, we define its Malliavin derivative $D_s^H F$ by
$$
D^H_s F:=\sum_{i=1}^{n}\frac{\partial g}{\partial x_i}(B^H(\psi_1), \cdots, B^H(\psi_n))\psi_i(s),\qquad  0\le s \le T.
$$

We also introduce another derivative %

\[
\mathbb{D}_{t}^{H}F :=%
{\textstyle\int_{0}^{T}}
D^{H}_{r}F \varphi_{H}(r,t)dr,
\]
\\
Let $\mathcal{L}%
_{H}^{1,2}([0,T])$  be the set of processes $G:[0,T]\times\Omega%
\rightarrow\mathbb{R}$ such that $\mathbb{D}_{s}^{H}G(s)$ exists for all
$s\in [0,T]$ and%

\[
\Vert G\Vert_{\mathcal{L}_{H}^{1,2}}^{2}:=\mathbb{E}[%
{\textstyle\int_{0}^{T}}
{\textstyle\int_{0}^{T}}
G(s)G(t)\varphi_{H}(t,s)dtds+(%
{\textstyle\int_{0}^{T}}
\mathbb{D}_{s}^{H}G(s)ds)^{2}]<\infty.
\]
\\
We let $\int_{0}^{T} G(s)dB^{H}(s)$ denote the fractional Wick-It\^{o}-Skorohod (fWIS) integral of the process $G$ with respect to $B^{H}$. We recall  its construction: if $G$ belongs to the family $\mathbb{S}$ of step functions of
the form%

\[
G(t,\omega)=%
{\textstyle\sum_{i=1}^{N}}
G_{i}(\omega)\chi_{\lbrack t_{i},t_{i+1}[}(t),
\]
where $0\leq t_{1}<t_{2}<...<t_{N+1}\leq T$, then the fWIS integral is defined naturally as follows%

\[%
{\textstyle\int_{0}^{T}}
G(t,\omega)dB^{H}(t):=%
{\textstyle\sum_{i=1}^{N}}
G_{i}(\omega)\diamondsuit(B^{H}(t_{i+1})-B^{H}(t_{i})),
\]
where $\diamondsuit$ denotes the Wick product, see \cite{BHOZ} for its definition. For $G \in\mathbb{S}%
\cap\mathcal{L}_{H}^{1,2}([0,T])$, we have the isometry%
\[
\mathbb{E}[(%
{\textstyle\int_{0}^{T}}
G(t)dB^{H}(t))^{2}]=\mathbb{E}[%
{\textstyle\int_{0}^{T}}
{\textstyle\int_{0}^{T}}
G(s)G(t)\varphi_{H}(t,s)dtds+(%
{\textstyle\int_{0}^{T}}
\mathbb{D}_{s}^{H}G(s)ds)^{2}],
\]
Using this we can extend the integral
$%
{\textstyle\int_{0}^{T}}
G(t)dB^{H}(t)$ to $\mathcal{L}_{H}^{1,2}([0,T])$. Note that if $G_{1}, G_{2}
\in\mathcal{L}_{H}^{1,2}([0,T])$, we have by polarization%

\[
\mathbb{E}[%
{\textstyle\int_{0}^{T}}
G_{1}(t)dB^{H}(t)%
{\textstyle\int_{0}^{T}}
G_{2}(t)dB^{H}(t)]=\mathbb{E}[%
{\textstyle\int_{0}^{T}}
G_{1}(s)G_{2}(t)\varphi_{H}(s,t)dsdt+%
{\textstyle\int_{0}^{T}}{\textstyle\int_{0}^{T}}
\mathbb{D}_{s}^{H}G_{1}(s)\mathbb{D}_{t}^{H}G_{2}(t)dsdt]%
\]
An important property of this integral is that%

\begin{equation*}
\mathbb{E}[%
{\textstyle\int_{0}^{T}}
G(t)dB^{H}(t)]=0; \hspace{0.5cm} \text{ for all} \hspace{0.5cm} G
\in\mathcal{L}_{H}^{1,2}([0,T]). \label{IntegFBM}%
\end{equation*}

\vspace{0.3cm}

We will need the following integration by parts formula.

\begin{prop}
[Integration by parts]\label{integ.part.form} Let $X$ and $Y$ be two processes
of the form
\[
dX(t)=F_{1}(t)dt+ G_{1}(t)dB^{H}(t),\hspace{0.5cm}X(0)=x\in\mathbb{R},
\]
and
\[
dY(t)= F_{2}(t)dt+ G_{2}(t)dB^{H}(t),\hspace{0.5cm}Y(0)=y\in\mathbb{R},
\]

\end{prop}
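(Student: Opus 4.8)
The plan is to obtain the product (integration by parts) formula from the one-dimensional fractional Wick--Itô formula together with a polarization identity, rather than computing the joint dynamics of $(X,Y)$ directly. The key prerequisite I would invoke is the fractional Itô formula in the Wick sense: for a process $dZ(t)=F(t)dt+G(t)dB^{H}(t)$ with $Z,G\in\mathcal{L}_{H}^{1,2}([0,T])$ and $f\in C^{1,2}$, one has
\[
f(t,Z(t))=f(0,Z(0))+\int_{0}^{t}\frac{\partial f}{\partial s}(s,Z(s))ds+\int_{0}^{t}\frac{\partial f}{\partial x}(s,Z(s))dZ(s)+\int_{0}^{t}\frac{\partial^{2} f}{\partial x^{2}}(s,Z(s))G(s)\mathbb{D}_{s}^{H}Z(s)ds,
\]
where the stochastic integral is again the fWIS integral and the last term is the fractional analogue of the Itô correction. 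This formula is available from the fractional White noise calculus of \cite{HOB,BHOZ}, so I would cite it and concentrate on the product structure. As a sanity check, applying it to $f(x)=x^{2}$ with $Z=B^{H}$ reproduces $(B^{H}_{t})^{2}=2\int_{0}^{t}B^{H}dB^{H}+t^{2H}$, confirming the normalization of the correction term.

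First I would apply this formula with $f(x)=x^{2}$ separately to $X$, to $Y$, and to the sum $W:=X+Y$, which satisfies $dW(t)=(F_{1}(t)+F_{2}(t))dt+(G_{1}(t)+G_{2}(t))dB^{H}(t)$. Since $\partial^{2}(x^{2})/\partial x^{2}=2$, this produces three expressions $d(X^{2})$, $d(Y^{2})$ and $d(W^{2})$, each carrying a correction term of the form $2G\,\mathbb{D}_{t}^{H}(\cdot)\,dt$. Then I would use the polarization identity
\[
X(t)Y(t)=\tfrac{1}{2}\bigl(W(t)^{2}-X(t)^{2}-Y(t)^{2}\bigr)
\]
and subtract. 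The drift and stochastic parts collapse to $X(t)dY(t)+Y(t)dX(t)$, while the correction terms combine, using the linearity of $\mathbb{D}_{t}^{H}$, into the cross term $\bigl(G_{1}(t)\mathbb{D}_{t}^{H}Y(t)+G_{2}(t)\mathbb{D}_{t}^{H}X(t)\bigr)dt$; here the diagonal pieces $G_{1}\mathbb{D}_{t}^{H}X$ and $G_{2}\mathbb{D}_{t}^{H}Y$ arising in $d(W^{2})$ cancel exactly against the corrections subtracted from $d(X^{2})$ and $d(Y^{2})$. This yields the asserted formula
\[
d(X(t)Y(t))=X(t)dY(t)+Y(t)dX(t)+\bigl(G_{1}(t)\mathbb{D}_{t}^{H}Y(t)+G_{2}(t)\mathbb{D}_{t}^{H}X(t)\bigr)dt.
\]

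To make each step rigorous I would first reduce to simple integrands $G_{1},G_{2}\in\mathbb{S}$ and regular drifts, where the fWIS integral is an explicit Wick sum and all Malliavin derivatives are computable, and then pass to the limit in $\mathcal{L}_{H}^{1,2}([0,T])$ using the isometry and polarization identities recalled just before the proposition. The main obstacle I anticipate is precisely the integrability bookkeeping for the correction terms: I must ensure that $X,Y\in\mathcal{L}_{H}^{1,2}$ so that $\mathbb{D}_{t}^{H}X(t)$ and $\mathbb{D}_{t}^{H}Y(t)$ exist, and that the products $G_{1}\mathbb{D}_{t}^{H}Y$ and $G_{2}\mathbb{D}_{t}^{H}X$ are integrable on $\Omega\times[0,T]$, so that the limit passage is legitimate and the cross term is genuinely well defined. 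Verifying the cancellation of the diagonal corrections in the polarization step is where the Wick/fractional structure must be handled with care, since for $H>1/2$ there is no pathwise quadratic variation and the entire correction originates from the operator $\mathbb{D}_{t}^{H}$.
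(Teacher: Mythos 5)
The paper itself offers no proof of this proposition: it is imported from the fractional white noise calculus literature (\cite{HOB,BHOZ}), and the polarized isometry $\mathbb{E}[\int_0^T G_1\,dB^H\int_0^T G_2\,dB^H]=\mathbb{E}[\iint G_1(s)G_2(t)\varphi_H(s,t)\,ds\,dt]+\mathbb{E}[\iint \mathbb{D}_s^HG_1(s)\mathbb{D}_t^HG_2(t)\,ds\,dt]$ recalled immediately before the statement is exactly the ingredient from which the stated form is meant to be read off. Your route through the fractional It\^o formula and polarization of $f(x)=x^2$ is a legitimate alternative: the cancellation of the diagonal corrections in $d(W^2)-d(X^2)-d(Y^2)$ works exactly as you describe, and the sanity check on $(B^H_t)^2$ is correct.

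The gap is in the last step. The identity you end with, $d(XY)=X\,dY+Y\,dX+(G_1\mathbb{D}_t^HY+G_2\mathbb{D}_t^HX)\,dt$, is \emph{not} the asserted formula: the proposition is a statement about $\mathbb{E}[X(T)Y(T)]$ whose correction terms involve only the integrands $G_1,G_2$ through $\mathbb{E}[\iint G_1(t)G_2(s)\varphi_H(t,s)\,ds\,dt]+\mathbb{E}[\iint\mathbb{D}_t^HG_1(t)\mathbb{D}_s^HG_2(s)\,ds\,dt]$, whereas yours involves $\mathbb{D}_t^H$ of the whole solution processes $X(t),Y(t)$. Bridging the two is genuine work that the phrase ``this yields the asserted formula'' skips: you must (i) integrate, take expectations, and use that $\int_0^TXG_2\,dB^H$ and $\int_0^TYG_1\,dB^H$ have zero mean (this is where the $\mathcal{L}_H^{1,2}$ hypotheses enter, and why the result is only an identity in expectation --- there is no pathwise version with the stated right-hand side); and (ii) expand $\mathbb{D}_t^HX(t)$, $\mathbb{D}_t^HY(t)$ via the commutation relation $D_r^H\int_0^tG\,dB^H=G(r)\chi_{[0,t]}(r)+\int_0^tD_r^HG(s)\,dB^H(s)$ together with the contribution of the drifts, then reassemble the double integrals by Fubini and the symmetry of $\varphi_H$. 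Step (ii) is where the term $\mathbb{E}[\iint\mathbb{D}^HG_1\,\mathbb{D}^HG_2]$ actually originates, and it also generates contributions involving $D^HF_1,D^HF_2$ that must be absorbed into $\mathbb{E}[\int X\,dY]+\mathbb{E}[\int Y\,dX]$; none of this appears in your final line. Note also that item 2 of the proposition ($G_1$ or $G_2$ deterministic) is immediate from the stated form since then $\mathbb{D}^HG=0$, but it cannot be read off from your version. A derivation that lands directly on the stated form is to expand $X(T)Y(T)$ from $X(T)=x+\int_0^TF_1\,dt+\int_0^TG_1\,dB^H$ and its analogue for $Y$, and treat the cross terms with the zero-mean property, the Malliavin duality, and the polarized isometry quoted above.
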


where $F_{1}:[0,T] \times\Omega\rightarrow\mathbb{R}$, $F_{2}: [0,T]\times\Omega
\rightarrow\mathbb{R}$, $ G_{1}: [0,T]\times \Omega%
\rightarrow\mathbb{R}$ and $ G_{2}: [0,T] \times\Omega \rightarrow
\mathbb{R}$ are given processes such that $G_{1}$, $G_{2}\in\mathcal{L}%
_{H}^{1,2}([0,T])$.

\begin{enumerate}
\item Then, for $T>0$,
\begin{align*}
\mathbb{E}[X(T)Y(T)]  &  =xy+\mathbb{E}[%
{\textstyle\int_{0}^{T}}
X(s)dY(s)]+\mathbb{E}[%
{\textstyle\int_{0}^{T}}
Y(s)dX(s)]\label{IntegPart}\\
&  \hspace*{0.3cm}+\mathbb{E}[%
{\textstyle\int_{0}^{T}}
{\textstyle\int_{0}^{T}}
G_{1}(t)G_{2}(s)\varphi_{H}(t,s)dsdt]+\mathbb{E}[%
{\textstyle\int_{0}^{T}\int_{0}^{T}}
\mathbb{D}_{t}^{H}G_{1}(t)\mathbb{D}_{s}^{H}G_{2}(s)dsdt],
\end{align*}
provided that the first two integrals exist.

\item In particular, if $G_{1}$ or $G_{2}$ is deterministic, then
\[
\mathbb{E}[X(T)Y(T)]=xy+\mathbb{E}[%
{\textstyle\int_{0}^{T}}
X(s)dY(s)]+\mathbb{E}[%
{\textstyle\int_{0}^{T}}
Y(s)dX(s)]+\mathbb{E}[%
{\textstyle\int_{0}^{T}}
{\textstyle\int_{0}^{T}}
G_{1}(t)G_{2}(s)\varphi_{H}(t,s)dsdt].
\]

\end{enumerate}
\subsection{Differentiability of Functions of Measures} 

Let $\mathcal{P}(\R)$ be the space of all probability measures on $(\R, \mathcal{B}(\R))$. We denote by $\mathcal{P}_p(\R)$ the subspace of  $\mathcal{P}(\R)$ of order $p$, which means
\[
\mathcal{P}_p(\R)=\{m\in\mathcal{P}(\R):\int_{\R} |x|^p m(dx)<+\infty\}.
\]
\begin{itemize}
  \item[$\bullet$] \textbf{ The Wasserstein metric :}
  
On $\mathcal{P}_p(\R)$, the Wasserstein metric of order $p$  is defined by
\[
\begin{aligned}
W_p(m,m^{\prime})=\inf\bigg\{\lt(\int_{\R^{2}} |x-y|^p\rho(dx,dy)\rt)^{\frac1p},& \ \rho\in\mathcal{P}_p(\R \times \R)\ \hbox{such that}\\ &\rho(\cdot\times\R)=m \ \hbox{and}\ \rho(\R\times\cdot)= m^{\prime}\bigg\}.
\end{aligned}
\]

Notice that if $\xi$ and $\eta$ are two real $p$-integrable random variables with laws $\PP_\xi$ and $\PP_\eta$, then we have $W_p(\PP_\xi,\PP_\eta)\le\lt(\E[{|\xi-\eta|^p}]\rt)^{\frac1p}$ since we can choose a special $\rho=\PP_{(\xi,\eta)}$ in the above definition. 
\\
\item[$\bullet$] \textbf{ Diffentiability of functions of measures :}
\\
\\
The notion of differentiability for functions of measures that we will use in the paper, is the one introduced by Lions in his course at  Coll\`ege de France \cite{Lions} and summarized by Cardaliaguet \cite{Cardaliaguet}. We also refer to Carmona and Delarue \cite{carmona2}.\\
\\
It's based on the \textit{lifting} of fuctions $m \in \mathcal{P}_2(\R) \mapsto \sigma(m)$ into functions $\tilde{\xi} \in L^{2}(\tilde{\Omega}; \mathbb{R}) \mapsto \tilde{\sigma}(\tilde{\xi})$, over some probability space $(\tilde{\Omega},\tilde{\mathcal{F}},\tilde{\mathbb{P}})$, by setting $\tilde{\sigma}
(\tilde{\xi}) := \sigma(\tilde{\mathbb{P}}_{\tilde{\xi}})$.

\begin{definition}
A function $\sigma$ is said to be differentiable at $ m_{0} \in \mathcal{P}_2(\R)$, if there exists a random variable $\tilde{\xi}_{0}\in L^2(\tilde{\Omega},\tilde{\F},\tilde{\PP})$ over some probability space $(\tilde{\Omega},\tilde{\F},\tilde{\PP})$ with $\tilde{\PP}_{\tilde{\xi}_{0}}= m_{0}$ such that $\tilde{\sigma}: L^2(\tilde{\Omega},\tilde{\F},\tilde{\PP}) \to \R$ is Fr\'echet differentiable at $\tilde{\xi}_{0}$.
\end{definition}

We suppose for simplicity that $\tilde{\sigma}: L^2(\tilde{\Omega},\tilde{\F},\tilde{\PP}) \to \R$ is Fr\'echet differentiable. We denote its Fr\'echet derivative at $\tilde{\xi}_{0}$ by $D\tilde{\sigma}(\tilde{\xi}_{0})$. Recall that $D\tilde{\sigma}(\tilde{\xi}_{0}):L^2(\tilde{\Omega},\tilde{\F},\tilde{\PP}) \to \R$ is a continuous linear mapping; i.e. $D\tilde{\sigma}(\tilde{\xi}_{0})\in L( L^2(\tilde{\Omega},\tilde{\F},\tilde{\PP}),\R)$.
With the identification that $L( L^2(\tilde{\Omega},\tilde{\F},\tilde{\PP}),\R) \equiv L^2(\tilde{\Omega},\tilde{\F},\tilde{\PP})$ given by Riesz representation theorem, $D\tilde{\sigma}(\tilde{\xi}_{0})$ is viewed as an element of 
$L^2(\tilde{\Omega},\tilde{\F},\tilde{\PP})$, hence we can write 
\[
\sigma(m)-\sigma(m_{0})=\tilde{\sigma}(\tilde{\xi})-\tilde{\sigma}(\tilde{\xi}_{0})=
\tilde{\mathbb{E}}[(D\tilde{\sigma})({\tilde{\xi}}_{0}).(\tilde{\xi}-\tilde{\xi}_{0})] +o(\tilde{\mathbb{E}}[|\tilde{\xi}-\tilde{\xi}_{0}|^{2}]^{1/2}),\  \textrm{as}\  \tilde{\mathbb{E}}[|\tilde{\xi}-\tilde{\xi}_{0}|^{2}]^{1/2} \to 0.
\]
where $\tilde{\xi}$ is a random variable with law $m$. Moreover, according to Cardaliaguet \cite{Cardaliaguet}, there exists a Borel function $h_{m_{0}}:\R\to\R$, such that $D\tilde{\sigma}(\tilde{\xi}_{0}) =h_{m_{0}}(\tilde{\xi}_{0})$, $\tilde{\PP}$-a.s. We define the derivative of $\sigma$ with respect to the measure at $m_{0}$ by putting $\partial_m\sigma(m_{0})(x) : =h_{m_{0}}(x)$. Notice that $\partial_m\sigma(m_{0})(x)$ is defined $m_{0}(d x)$-a.e. uniquely. Therefore, the following differentiation formula is invariant by modification of the space $\tilde{\Omega}$ where the random variables $\tilde{\xi}_{0}$ and $\tilde{\xi}$ are defined, i.e.
\[
\sigma(m)-\sigma(m_{0})=\tilde{\E}[
\partial_m\sigma(m_{0})(\tilde{\xi}_{0}).(\tilde{\xi}-\tilde{\xi}_{0})]+o(\tilde{\mathbb{E}}[|\tilde{\xi}-\tilde{\xi}_{0}|^{2}]^{1/2}),\  \textrm{as}\ \tilde{\mathbb{E}}[|\tilde{\xi}-\tilde{\xi}_{0}|^{2}]^{1/2} \to 0. 
\]
whenever $\tilde{\xi}$ and $\tilde{\xi}_{0}$ are random variables with laws $m$ and $m_{0}$ respectively.

\item[$\bullet$] \textbf{ Joint concavity}

We will need the joint concavity of a function on $(\R\times \mathcal{P}_2(\R))$. A differentiable function $b$ defined on  $(\R \times \mathcal{P}_2(\R))$ is concave, if for every   $(x^\pr,m^\pr)$ and $(x,m) \in  (\R \times \mathcal{P}_2(\R))$, we have 
\begin{equation}
\label{concavity}
\begin{aligned}
b(x^{\prime},m^{\prime})-b(x,m)- \partial_x b(x,m) (x^\pr-x) -\tilde{{\E}}[\partial_m b(x,m)(\tilde{X})(\tilde{X}^\pr-\tilde{X})] \leq 0, 
\end{aligned}
\end{equation}
whenever $\tilde{X}, \tilde{X}^\pr\in L^2(\tilde{\Omega},\tilde{\F},\tilde{\PP};\R)$ with laws $m$ and $m^{\prime}$
respectively.
\end{itemize}
\section{Necessary maximum principle}
In this section, we establish a maximum principle of necessary type. \\
\\
For this end, we assume that $\mathcal{U}$ is a closed convex set (and hence $\mathcal{A}_{\mathbb{F}}$ is convex). Now for a given $u^{\ast}\in\mathcal{A}_{\mathbb{F}}$ and an arbitrary but fixed control $u\in\mathcal{A}_{\mathbb{F}}$, we
define
\[%
\begin{array}
[c]{ll}%
u^{\theta}:=u^{\ast}+\theta\left(  u-u^{\ast}\right)  , & \theta\in\left[
0,1\right]  .
\end{array}
\]
Note that, thanks to the convexity of $\mathcal{A}%
_{\mathbb{F}}$,  $u^{\theta}\in\mathcal{A}_{\mathbb{F}}, \text{for all \hspace{0.1cm}} \theta\in\left[
0,1\right]  $. We denote by $X^{\theta}:=X^{u^{\theta}}$ and by $X^{\ast
}:=X^{u^{\ast}}$ the controlled state processes corresponding to $u^{\theta}$ and
$u^{\ast}$\ respectively.
\\
\\
For $u^{\ast}\in\mathcal{A}_{\mathbb{F}}$ and the associated controlled
state process $X^{\ast}$, let $Y(t) := \frac{d}{d \theta} X^{\theta}(t)|_{\theta =0}$, hence $Y$ satisfies the following  SDDE :%
\begin{equation}
\begin{array}
[c]{ll}%
dY(t)& = \{ \partial_{x}b^{*}(t)Y(t)+\partial_{\bar{x}}%
b^{*}(t)Y(t-\delta)+ \mathbb{\tilde{E}}[\partial_{m}b^{*}(t)(\tilde{X}^{*}(t))\tilde{Y}(t)] \\
& \hspace*{0.8cm} + \mathbb{\tilde{E}}[\partial_{\bar{m}}b^{*}(t)(\tilde{X}^{*}(t-\delta))\tilde{Y}(t-\delta)] +\partial_{u}b^{*}(t)(u(t)-u^{\ast}(t))\}dt\\
& \hspace*{1.3cm} +\{\mathbb{\tilde{E}}[\partial_{m}\sigma(t)(\tilde{X}^{*}(t))\tilde{Y}(t)] +  \mathbb{\tilde{E}}[\partial_{\bar{m}}\sigma^{*}(t)(\tilde{X}^{*}(t-\delta))\tilde{Y}(t-\delta)]\}dB_{t}^{H}, \text{ \ } t \in [0,T],
\\
\\
Y(t)  & =0; \hspace{0.5cm} t\in\left[  -\delta,0\right]  \text{.}%
\end{array}
  \label{y}%
\end{equation}

where we used the following notations:

\[%
\begin{array}
[c]{ll}%
\partial_{x}b^{*}(t)& := \partial_{x}b(t,X^{\ast}(t),X^{\ast}(t-\delta),M^{\ast}(t),M^{\ast}(t-\delta),u^{\ast}(t)),\\
\partial_{m}b^{*}(t)(.)& := \partial_{m}b(t,X^{\ast}(t),X^{\ast}(t-\delta),M^{\ast}(t),M^{\ast}(t-\delta),u^{\ast}(t))(.),\\
\partial_{m}\sigma^{*}(t)(.) & := \partial_{m}\sigma(t,M^{\ast}(t),M^{\ast}(t-\delta))(.).
\end{array}
\]
and $(\tilde{X}, \tilde{Y}, \tilde{u})$ is an independant copy of $(X,Y,u)$ defined on some probability space $(\tilde{\Omega}, \tilde{\mathcal{F}}, \tilde{\mathbb{P}})$ and $\tilde{\mathbb{E}}$ denotes the expectation on $(\tilde{\Omega}, \tilde{\mathcal{F}}, \tilde{\mathbb{P}})$. 
\begin{rem}\label{rem}
From the  definition of the tilde random variables and since $\sigma$ is deterministic, we have 
\begin{equation*}
\tilde{\mathbb{E}}[\partial_{m}\sigma^{\ast}(t)(\tilde{X}^{\ast}(t))\tilde{Y}^{\ast}(t)] = \mathbb{E}[\partial_{m}\sigma^{\ast}(t)({X}^{\ast}(t)){Y}^{\ast}(t)].
\end{equation*}
Note that using the previous notations, $\mathbb{\tilde{E}}[\partial_{m}b^{\ast}(t)(\tilde{X}^{\ast}(t))\tilde{Y}(t)]$ is a function of the random vector $(X^{\ast}(t),X^{\ast}({t-\delta}),u^{\ast}(t))$ as it stands for  \\
$\mathbb{\tilde{E}}[\partial_{m}b(t,x,\bar{x},M^{\ast}(t),M^{\ast}(t-\delta),u)(\tilde{X}^{*}(t))\tilde{Y}(t)]|_{x = X^{\ast}(t),\bar{x}=X^{\ast}(t-\delta), u = u^{\ast}(t)}$.
\end{rem}
We assume that the derivative process $Y$ exists and belongs to $L^{2}(\Omega \times [0,T])$ and that the function $\psi_ {\delta}^{*}: t \mapsto \mathbb{\tilde{E}}[\partial_{m}\sigma^{*}(t)(\tilde{X}^{*}(t))\tilde{Y}(t)] +  \mathbb{\tilde{E}}[\partial_{\bar{m}}\sigma^{*}(t)(\tilde{X}^{*}(t-\delta))\tilde{Y}(t-\delta)] $ is  in $\mathbb{L}_{H}^{2}([0,T])$, the integral with respect to the fBm is therefore well defined in the Wiener sense. \\
\\
Now if $u^{\ast}$ is an optimal control, we have $J(u^{\ast})\leq
J(u^{\theta})$, for all $\theta\in\left[  0,1\right]  $, i.e.,%

\begin{equation}
0\leq\underset{\theta\rightarrow0}{\lim}\tfrac{J(u^{\theta})-J(u^{\ast}%
)}{\theta}\text{.} \label{j}%
\end{equation}
with
\begin{align}\label{J}
\underset{\theta\rightarrow0}{\lim}\tfrac{1}{\theta}(J(u^{\theta})-J(u^{\ast
}))
&  = \mathbb{E[}\partial_{x}g^{*}(T)Y(T)+ \mathbb{\tilde{E}}[\partial_{m}g^{*}(T)(\tilde{X}^{*}(T))\tilde{Y}(T)]] \nonumber \\
& +\mathbb{E[}%
{\textstyle\int_{0}^{T}}
\{\partial_{x}f^{*}(t)Y(t)+ \partial_{\bar{x}}f^{*}(t)Y(t-\delta)+ \mathbb{\tilde{E}}[\partial
_{m}f^{*}(t)(\tilde{X}^{*}(t))\tilde{Y}(t)]
  \nonumber \\
  & + \mathbb{\tilde{E}}[\partial
_{\bar{m}}f^{*}(t)(\tilde{X}^{*}(t-\delta))\tilde{Y}(t-\delta)] + \partial_{u}f^{*}(t)\left(
u(t)-u^{\ast}(t)\right)  \}dt].
\end{align}
where we have used the simplified notations :
\[%
\begin{array}
[c]{ll}%
\partial_{x}g^{*}(T)& := \partial_{x}g(X^{\ast}(T),M^{\ast}(T)),\\
\partial_{m}g^{*}(T)(.)& := \partial_{m}g(X^{\ast}(T),M^{\ast}(T))(.),\\
\partial_{x}f^{*}(t)& := \partial_{x}f(t,X^{\ast}(t),X^{\ast}(t-\delta),M^{\ast}(t),M^{\ast}(t-\delta),u^{\ast}(t)),\\
\partial_{{m}}f^{*}(t)(.)& := \partial_{{m}}f(t,X^{\ast}(t),X^{\ast}(t-\delta),M^{\ast}(t),M^{\ast}(t-\delta),u^{\ast}(t))(.),\\
\partial_{m}\sigma^{*}(t)(.) & := \partial_{m}\sigma(t,M^{\ast}(t),M^{\ast}(t-\delta))(.).
\end{array}
\]
In order to determine the adjoint backward equation associated to $(u^{*},X^{*})$, we suppose that it
has in general the following form%

\begin{equation}
\left\{
\begin{array}
[c]{ll}%
dp^{*}(t) & =-\alpha(t)dt+q^{*}(t)dB_{t}^{H}, \text{ \  }t\in\left[  0,T\right]  ,\\
p^{*}(T) & =  \partial_{x} g^{*}(T) +  \mathbb{\tilde{E}}[\partial_{m}\tilde{g}^{*}(T)(X(T))].
\end{array}
\right.  \label{bac}%
\end{equation}

\vspace{0.5cm}
where $(p^{*},q^{*})$ is assumed to be in $\mathcal{L}_{H}^{1,2}([0,T])\times \mathcal{L}_{H}^{1,2}([0,T]) $, the integral with respect to the fBm is a fractional Wick-It\^{o}-Skorohod integral and $\alpha$ is  some $\mathbb{F}$-adapted process  which we have to determine. \\
\\
Applying the integration by parts formula of Proposition \ref{integ.part.form}, to $p^{*}\left(  t\right)$ and $Y\left(  t\right)  $, we obtain

\begin{align*}
\mathbb{E}[p^{*}(T)Y(T)]  &  = \mathbb{E}[\int_{0}^{T}p^{*}(t)dY(t)]+ \mathbb{E}[\int_{0}^{T}Y(t)dp^{*}(t)] + \mathbb{E}[\int_{0}^{T}\int_{0}^{T}
q^{*}(t) \psi_{\delta}^{*}(t)\varphi_{H}(s,t)dsdt] \\
& = \mathbb{E}[\int_{0}^{T}p^{*}(t)\{\partial_{x}b^{*}(t)Y(t)+\partial_{\bar{x}}b^{*}(t)Y(t-\delta
) + \mathbb{\tilde{E}}[\partial_{m}b^{*}(t)(\tilde{X}^{*}(t))\tilde{Y}(t)] \\
& \hspace{0.3cm} + \mathbb{\tilde{E}}[\partial_{\bar{m}}b^{*}(t)(\tilde{X}^{*}(t-\delta))\tilde{Y}(t-\delta)]  + \partial_{u}b^{*}(t) (u(t)-u^{*}(t)) \}dt] - \mathbb{E}[\int_{0}^{T} Y(t) \alpha(t)dt] \\
& \hspace{0.3cm} + \mathbb{E}[\int_{0}^{T}\int_{0}^{T}
q^{*}(s) \psi^{*}_{\delta}(t) \varphi_{H}(s,t)dsdt]
\end{align*}
where we assumed that $Y(t)q^{*}(t) \in \mathcal{L}_{H}^{1,2}([0,T])$ and $\psi^{*}_{\delta}(t)p^{*}(t) \in \mathcal{L}_{H}^{1,2}([0,T])$. By Fubini's theorem, remark (\ref{rem}), remplacing $\psi^{*}_{\delta}(t)$ by its value and by a change of variables using the fact that $Y(t) = 0$ for all $t \in [-\delta,0]$ and  , we get
\begin{align}\label{PY}
\mathbb{E}[p^{*}(T)Y(T)]  & = \mathbb{E}[\int_{0}^{T} Y(t) \{ p^{*}(t) \partial_{x}b^{*}(t) + p^{*}(t+\delta)\partial_{\bar{x}}b^{*}(t+\delta) \chi_{[0,T-\delta]}(t) - \alpha(t)\nonumber \\
&  \hspace{0.6cm} + \mathbb{\tilde{E}}[\tilde{p}^{*}(t)\partial_{m}\tilde{b}^{*}(t)(X^{*}(t))] + \mathbb{\tilde{E}}[\tilde{p}^{*}(t+\delta)\partial_{\bar{m}}\tilde{b}^{*}(t+\delta)(X^{*}(t))]\chi_{[0,T-\delta]}(t) \nonumber \\
& \hspace{0.6cm} + \int_{0}^{T}  \{ \tilde{\mathbb{E}}[\tilde{q}^{*}(s)]\partial_{m}{\sigma}^{*}(t)(X^{*}(t))\varphi_{H}
(t,s)  \nonumber \\
& \hspace{0.6cm}+ \tilde{\mathbb{E}}[\tilde{q}^{*}(s)]\partial_{\bar{m}}{{\sigma}}^{*}(t +\delta)(X^{*}(t)) \chi_{[0,T-\delta]}(t)\varphi_{H}
(t+\delta,s) \}ds \}dt] \nonumber \\
& \hspace{0.6cm} +\mathbb{E}[\int_{0}^{T}  p^{*}(t) \partial_{u}b^{*}(t)(u(t)-u^{*}(t)) dt].
\end{align}
where $(\tilde{p}^{*},\tilde{q}^{*})$ is an independant copy of $(p^{*},q^{*})$ defined on some probability space $(\tilde{\Omega},\tilde{\mathcal{F}},\tilde{\mathbb{P}})$.\\
\\
By substituting  $\left(\ref{PY}\right)$ in $\left(  \ref{J}\right)$ and using the terminal value of the BSDE, we get
\begin{align*}
0 & \leq \mathbb{E}[\int_{0}^{T} Y(t) \{ p^{*}(t) \partial_{x}b^{*}(t) + p^{*}(t+\delta) \partial_{\bar{x}}b^{*}(t+\delta) \chi_{[0,T-\delta]}(t) + \mathbb{\tilde{E}}[\tilde{p}^{*}(t)\partial_{m}\tilde{b}^{*}(t)(X^{*}({t}))]\\
& \hspace{0.4cm} +  \mathbb{\tilde{E}}[\tilde{p}^{*}(t+\delta)\partial_{\bar{m}}\tilde{b}^{*}(t+\delta)(X^{*}({t}))]\chi_{[0,T-\delta]}(t)- \alpha(t) + \int_{0}^{T} \{ \tilde{\mathbb{E}}[\tilde{q}^{*}(s)] \partial_{m}{{\sigma}}^{*}(t)(X^{*}(t))\varphi_{H}
(t,s)  \\
& \hspace{0.4cm} +  \tilde{\mathbb{E}}[\tilde{q}^{*}(s)]\partial_{\bar{m}}{{\sigma}}^{*}(t +\delta)(X^{*}(t))\chi_{[0,T-\delta]}(t)\varphi_{H}
(t+\delta,s) \}ds + \partial_{x}f^{*}(t)   \\
& \hspace{0.4cm} + \partial_{\bar{x}}f^{*}(t+\delta)\chi_{[0,T-\delta]}(t) + \mathbb{\tilde{E}}[\partial_{{m}}\tilde{f}^{*}(t)(X^{*}(t))] + \mathbb{\tilde{E}}[\partial_{\bar{m}}\tilde{f}^{*}(t+\delta)(X^{*}(t))]\chi_{[0,T-\delta]}(t) \} dt] \\ & \hspace{0.4cm} + \mathbb{E}[\int_{0}^{T} \{ p^{*}(t) \partial_{u}b^{*}(t) + \partial_{u}f^{*}(t)\} (u(t)-u^{*}(t))dt].
\end{align*}
Letting the integrand which contains $Y(t)$ equal to zero, we get 
\begin{align*}
\alpha(t) & = p^{*}(t) \partial_{x}b^{*}(t) + p^{*}(t+\delta) \partial_{\bar{x}}b^{*}(t+\delta) \chi_{[0,T-\delta]}(t) + \mathbb{\tilde{E}}[\tilde{p}^{*}(t)\partial_{m}\tilde{b}^{*}(t)(X^{*}({t}))] \\
&\hspace{0.4cm} +  \mathbb{\tilde{E}}[\tilde{p}^{*}(t+\delta)\partial_{\bar{m}}\tilde{b}^{*}(t+\delta)(X^{*}({t}))]\chi_{[0,T-\delta]}(t) + \int_{0}^{T}  \{ \tilde{\mathbb{E}}[\tilde{q}^{*}(s)]\partial_{m}{{\sigma}}^{*}(t)(X^{*}(t))\varphi_{H}
(t,s)  \\
& \hspace{0.4cm} + \tilde{\mathbb{E}}[\tilde{q}^{*}(s)]\partial_{\bar{m}}{{\sigma}}^{*}(t +\delta)(X^{*}(t))\chi_{[0,T-\delta]}(t)\varphi_{H}
(t+\delta,s) \}ds + \partial_{x}f^{*}(t) + \partial_{\bar{x}}f^{*}(t+\delta)\chi_{[0,T-\delta]}(t)  
 \\
& \hspace{0.4cm} + \mathbb{\tilde{E}}[\partial_{m}\tilde{f}^{*}(t)(X^{*}(t))] + \mathbb{\tilde{E}}[\partial_{\bar{m}}\tilde{f}^{*}(t+\delta)(X^{*}(t))]\chi_{[0,T-\delta]}(t).
\end{align*}
where, for simplicity of notations, we have set :
\\
\[%
\begin{array}
[c]{ll}%
\partial_{x}\tilde{b}^{*}(t) & := \partial_{x}b(t,\tilde{X}^{\ast}(t),\tilde{X}^{\ast}(t-\delta),M^{\ast}(t),M^{\ast}(t-\delta
),\tilde{u}^{\ast}(t)), \\
\partial_{m}\tilde{b}^{*}(t)(.) & := \partial_{m}b(t,\tilde{X}^{\ast}(t),\tilde{X}^{\ast}(t-\delta),M^{\ast}(t),M^{\ast}(t-\delta
),\tilde{u}^{\ast}(t))(.), \\
\partial_{m}\tilde{f}^{*}(t)(.) & := \partial_{m}f(t,\tilde{X}^{\ast}(t),\tilde{X}^{\ast}(t-\delta),M^{\ast}(t),M^{\ast}(t-\delta
),\tilde{u}^{\ast}(t))(.).
\end{array}
\]
\\
\\
We define now the Hamiltonian associated to our problem by:%
\[
H:\Omega\times\left[  0,T\right]  \times%
\mathbb{R}
\times
\mathbb{R}\times \mathcal{P}_{2}(\mathbb{R})\times\mathcal{P}_{2}(\mathbb{R})\times\mathcal{U}\times%
\mathbb{R}
\times%
\mathcal{R}
\rightarrow%
\mathbb{R}
\]
by%
\begin{equation}%
\begin{array}
[c]{ll}%
H(t,x,\overline{x},m,\overline{m},u,p,q(.)) & = f(t,x,\overline
{x},m,\overline{m},u)+p\text{\ }b(t,x,\overline{x},m,\overline{m},u) +%
\sigma(t,m,\overline{m}){\textstyle\int_{0}^{T}}
q(s)\varphi_{H}(s,t)ds.
\end{array}
\label{ham}%
\end{equation}
where $\mathcal{R}$ is the set of continuous functions from $[0,T]$ into $\mathbb{R}$. \\
\\
For $u\in\mathcal{A}_{\mathbb{F}}$ with corresponding solution $X=X^{u}$,
define, whenever solutions exist, $(p,q) := (p^{u},q^{u}) $, by the adjoint equation, in terms of the Hamiltonian, as follows:
\begin{equation}
\left\{
\begin{array}
[c]{lll}%
dp(t) & = &-\{ \partial_{x}H(t)+\mathbb{E}[\partial_{\bar{x}}H(t+\delta)\chi_{[0,T-\delta]}(t)|\mathcal{F}_{t}]\\

& + & \tilde{\mathbb{E}}[\partial_{m}\tilde{H}(t)(X(t))] + \mathbb{\tilde{E}}[\partial_{\bar{m}}\tilde{H}(t+\delta)(X(t)) \chi_{[0,T-\delta]}(t)]\}dt +q(t)dB^{H}%
(t);t\in\left[  0,T\right]  ,\\
\\
p(T) & = & \partial_{x}g(T)+ \mathbb{\tilde{E}}[\partial_{m}\tilde{g}(T)({X}(T))]
\end{array}
\right.  \label{eq:2}%
\end{equation}
We assume that $(p,q)$ is in $\mathcal{L}_{H}^{1,2}([0,T])\times \mathcal{L}_{H}^{1,2}([0,T]) $, the integral with respect to the fBm is understood in the fractional Wick-It\^{o}-Skorohod sense. \\

For simplicity of notations, we have put :
\[%
\begin{array}
[c]{ll}%
\partial_{x}H(t) & := \partial_{x}H(t,X(t),X(t-\delta),M(t),M(t-\delta),u(t),p(t),q(.)), \\
\partial_{m}\tilde{H}(t)(.) & := \partial_{m}H(t,\tilde{X}(t),\tilde{X}(t-\delta),M(t),M(t-\delta),\tilde{u}(t),\tilde{p}(t),\tilde{q}(.))(.),\\
\partial_{x}g(T) & :=\partial_{x}g(X(T),M(T)), \text{ \ } \partial_{m}\tilde{g}(T)(.) = \partial_{m}g(\tilde{X}(T),M(T))(
.).
\end{array}
\]
\begin{rem}\label{remarkimportant}
By the definition of the Hamiltonian given above, the time advanced BSDE (\ref{eq:2}) is expressed as a first part which appears to be linear and a second part with coefficients that contain the laws or more precisely the joint distribution of the solution processes. This type of Backward Stochastic Differential Equations was never been studied before. However, when there is no mean-field terms and no advance in time, several resolutions were proposed see for instance \cite{HOB}, \cite{HZ} and for a more general setting \cite{HP}. In the section devoted to the applications, we suggest some dynamics where the mean-field term  appears mainly in the terminal cost functional, the resolution of the BSDE (\ref{eq:2}) is in this case possible following a recursive procedure.
\end{rem}
We establish in the following theorem the necessary condition of optimality.
\begin{thm}[Necessary condition of optimality]We assume that the couple $(u^{\ast
},X^{\ast})$ is optimal. Suppose that there exists $p^{*}(t)$, $q^{*}(t)$,
 solutions of the adjoint equation $\left(\ref{eq:2}\right)$ associated to the pair $(u^{\ast},X^{\ast})$ and that $Y(t)q^{*}(t) \in \mathcal{L}_{H}^{1,2}([0,T])$ and $\psi^{*}_{\delta}(t)p^{*}(t) \in \mathcal{L}_{H}^{1,2}([0,T]) $. Then%
\[
0\leq\mathbb{E[}\int_{0}^{T}\partial_{u}H^{\ast}(t)\left(  u(t)-u^{\ast}(t)\right) dt].
\]
\end{thm}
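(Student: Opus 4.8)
The plan is to reduce the optimality inequality \eqref{j} to the asserted variational inequality by feeding the representation of $\mathbb{E}[p^{\ast}(T)Y(T)]$ obtained from integration by parts into the explicit formula \eqref{J} for the directional derivative of $J$. The starting point is that optimality of $u^{\ast}$ yields \eqref{j}, and that differentiating $J$ along the admissible direction $u-u^{\ast}$ gives \eqref{J}, which is a linear functional of the derivative process $Y$ solving the linearized equation \eqref{y}. The whole difficulty is to remove the implicit dependence on $Y$ so that only $(u-u^{\ast})$ and the adjoint pair $(p^{\ast},q^{\ast})$ remain.

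First I would apply the integration-by-parts formula of Proposition \ref{integ.part.form} to $p^{\ast}$ and $Y$. The key simplification is that the diffusion coefficient $\psi_{\delta}^{\ast}$ of $Y$ is deterministic: it is built from $\tilde{\mathbb{E}}[\,\cdot\,]$ over the independent copy and $\sigma$ is deterministic, so $\mathbb{D}^{H}\psi_{\delta}^{\ast}=0$ and part 2 of the formula applies, leaving only the single $\varphi_{H}$-kernel term coupling $q^{\ast}$ and $\psi_{\delta}^{\ast}$. Using Fubini's theorem, Remark \ref{rem}, the vanishing of $Y$ on $[-\delta,0]$, and the change of variables $t\mapsto t+\delta$ that converts the delayed terms $Y(t-\delta)$ into advanced coefficients $p^{\ast}(t+\delta),\ \partial_{\bar{x}}b^{\ast}(t+\delta)$ cut off by $\chi_{[0,T-\delta]}$, I would arrive at the representation \eqref{PY} of $\mathbb{E}[p^{\ast}(T)Y(T)]$.

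Next I would substitute \eqref{PY} into \eqref{J}. The terminal condition of \eqref{eq:2}, combined with the symmetry between the base space and the copy $(\tilde{\Omega},\tilde{\mathcal{F}},\tilde{\mathbb{P}})$ that identifies the two terminal mean-field terms, lets me recognize the boundary contribution $\mathbb{E}[\partial_{x}g^{\ast}(T)Y(T)+\tilde{\mathbb{E}}[\partial_{m}g^{\ast}(T)(\tilde{X}^{\ast}(T))\tilde{Y}(T)]]$ as exactly $\mathbb{E}[p^{\ast}(T)Y(T)]$, so it cancels. What remains is an inequality consisting of one integral multiplying $Y(t)$ and one multiplying $(u(t)-u^{\ast}(t))$. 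The drift of the adjoint equation \eqref{eq:2} is defined precisely so that the $Y(t)$-integrand is zero; comparing with the Hamiltonian \eqref{ham} one checks that $\partial_{x}H^{\ast}$, the conditional advanced term $\mathbb{E}[\partial_{\bar{x}}H^{\ast}(t+\delta)\chi_{[0,T-\delta]}(t)\mid\mathcal{F}_{t}]$, and the two $\partial_{m}\tilde{H}^{\ast}$, $\partial_{\bar{m}}\tilde{H}^{\ast}$ contributions (the latter capturing the $\sigma$-mean-field terms through the $\sigma$-part of $H$) reproduce exactly the coefficient of $Y(t)$, after replacing the non-$\mathcal{F}_{t}$-measurable advanced terms by their $\mathcal{F}_{t}$-conditional expectations; this is legitimate because $Y(t)$ is $\mathcal{F}_{t}$-measurable, so $\mathbb{E}[Y(t)\alpha(t)]=\mathbb{E}[Y(t)\,\mathbb{E}[\alpha(t)\mid\mathcal{F}_{t}]]$. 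The surviving inequality is $0\le\mathbb{E}[\int_{0}^{T}\{p^{\ast}(t)\partial_{u}b^{\ast}(t)+\partial_{u}f^{\ast}(t)\}(u(t)-u^{\ast}(t))\,dt]$, and since $\sigma$ does not depend on $u$, the Hamiltonian \eqref{ham} gives $\partial_{u}H^{\ast}(t)=\partial_{u}f^{\ast}(t)+p^{\ast}(t)\partial_{u}b^{\ast}(t)$, which is the conclusion.

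The step I expect to be the main obstacle is the simultaneous bookkeeping of the two kinds of nonlocal terms. The mean-field terms require a genuine Fubini interchange between the base space and the independent copy to transfer the $\partial_{m}$ and $\partial_{\bar{m}}$ derivatives onto the correct arguments, while the delay/advance terms demand the change of variables and the $\mathcal{F}_{t}$-conditioning be carried out at once so that the resulting adjoint equation \eqref{eq:2} is genuinely $\mathbb{F}$-adapted. Alongside this, one must check the integrability hypotheses $Y q^{\ast}\in\mathcal{L}_{H}^{1,2}([0,T])$ and $\psi_{\delta}^{\ast}p^{\ast}\in\mathcal{L}_{H}^{1,2}([0,T])$ that justify the integration-by-parts step; since these are already assumed in the statement, the real effort lies entirely in the interchange-of-expectations accounting.
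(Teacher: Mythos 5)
Your proposal is correct and follows essentially the same route as the paper's proof: integration by parts applied to $p^{*}$ and $Y$ (with the cross term reduced to the single $\varphi_{H}$-kernel because $\psi^{*}_{\delta}$ is deterministic), Fubini and the change of variables to handle the delay and mean-field terms, substitution into the directional-derivative formula, and cancellation of the $Y(t)$-integrand by the definition of the adjoint drift, leaving only the $\partial_{u}H^{\ast}$ term.
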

\begin{proof}
Suppose that $u^{\ast}$ is an optimal control, then we have%
\begin{equation}%
\begin{array}
[c]{l}%
0\leq\text{ }\underset{\theta\rightarrow0}{\lim}\tfrac{1}{\theta}(J(u^{\theta
})-J(u^{\ast}))\\
=\mathbb{E[}p^{*}(T)Y(T)]+\mathbb{E[}%
{\textstyle\int_{0}^{T}}
\{\partial_{x}f^{*}(t)Y(t)+\partial_{\bar{x}}f^{*}(t)Y(t-\delta)\\
 \hspace{0.3cm}+ \mathbb{\tilde{E}}[\partial_{m}f^{*}(t)(\tilde{X}^{*}(t))\tilde{Y}(t)]+ \mathbb{\tilde{E}}[\partial_{\bar{m}}f^{*}(t)(\tilde{X}^{*}(t-\delta))\tilde{Y}(t-\delta)] +\partial_{u}f^{*}(t)\left( u(t)-u^{\ast}(t)\right)  \}dt].
\end{array}
\label{va}%
\end{equation}

Applying again the integration by parts formula of Proposition \ref{integ.part.form} to $p^{*}\left(  t\right)$ and $Y\left(  t\right)  $ then using Fubini's theorem, remplacing $\psi^{*}_{\delta}(t)$ by its value, and using a change of variable and the fact that $Y(t) = 0$ for all $t \in [-\delta,0]$, we get
\begin{align}\label{A1}
\mathbb{E}[A_{1}]  &  = \mathbb{E}[\int_{0}^{T}p^{*}(t)dY(t)]+ \mathbb{E}[\int_{0}^{T}Y(t)dp^{*}(t)]+ \mathbb{E}[\int_{0}^{T}\int_{0}^{T}
q^{*}(s) \psi^{*}_{\delta}(t) \varphi_{H}(t,s)ds dt] \nonumber \\
& = \mathbb{E}[\int_{0}^{T}Y(t) \{ p^{*}(t) \partial_{x}b^{*}(t) + p^{*}(t+\delta)\partial_{\bar{x}}b^{*}(t+\delta) \chi_{[0,T-\delta]}(t) +\mathbb{\tilde{E}}[\tilde{p}^{*}(t)\partial_{m}\tilde{b}^{*}(t)(X^{*}(t))] \nonumber\\
& \hspace{0.2cm} + \mathbb{\tilde{E}}[\tilde{p}^{*}(t+\delta)\partial_{\bar{m}}\tilde{b}^{*}(t+\delta)(X^{*}(t))]\chi_{[0,T-\delta]}(t) \nonumber  - \{ \partial_{x}H^{*}(t) +\mathbb{E}[\partial_{\bar{x}}H^{*}(t+\delta)\chi_{[0,T-\delta]}(t)|\mathcal{F}_{t}]\\
& \hspace*{0.2cm}  + \tilde{\mathbb{E}}[\partial_{m}\tilde{H}^{*}(t)(X^{*}(t))]+ \mathbb{E}[\mathbb{\tilde{E}}[\partial_{\bar{m}}\tilde{H}^{*}(t+\delta)(X^{*}(t)) \chi_{[0,T-\delta]}(t)]|\mathcal{F}_{t}] \} \nonumber  \\
& \hspace{0.2cm} + \int_{0}^{T}  \{ \tilde{\mathbb{E}}[\tilde{q}^{*}(s)] \partial_{m}{{\sigma}}^{*}(t)(X^{*}(t))\varphi_{H}
(t,s) + \tilde{\mathbb{E}}[\tilde{q}^{*}(s)]\partial_{\bar{m}}{{\sigma}}^{*}(t +\delta)(X^{*}(t)) \chi_{[0,T-\delta]}(t)\varphi_{H}
(t+\delta,s) \}ds \}dt] \nonumber\\
& \hspace*{0.2cm} +\mathbb{E}[\int_{0}^{T}  p^{*}(t) \partial_{u}b^{*}(t)(u(t)-u^{*}(t)) dt].]
\end{align}

where $A_{1} := \mathbb{E}[p^{*}(T)Y(T)]$, then by (\ref{va}), (\ref{A1}) and applying the definition of the Hamiltonian, we get the desired result, that is%
\[
0\leq\mathbb{E[}%
{\textstyle\int_{0}^{T}}
\partial_{u}H^{\ast}(t)\left(  u(t)-u^{\ast}(t)\right)  dt].
\]
\end{proof}
\section{Sufficient maximum principle}
In this section, we proove sufficient stochastic maximum principle.
\begin{thm}[Sufficient condition of optimality]\label{suff.cond} Let $u^{\ast}%
\in\mathcal{A}_{\mathbb{F}}$ with corresponding controlled state $X^{\ast}$. Suppose
that there exist $p^{*}(t)$, $q^{*}(t)$ solution
of the associated adjoint equation $\left(  \ref{eq:2}\right)$. Assume the following:
 
\begin{enumerate}
 \item $X^{u}(t)q^{*}(t) \in \mathcal{L}_{H}^{1,2}([0,T])$, $p^{*}(t)\sigma(t,M(t),M(t-\delta))\in \mathcal{L}_{H}^{1,2}([0,T])$ $\forall u \in \mathcal{A}_{\mathbb{F}}$.
\item (Concavity) The functions
\[%
\begin{array}
[c]{ll}%
(x,\bar{x},m,\bar{m},u) & \mapsto H(t,x,\bar{x},m,\bar{m},u,p^{*}(t),q^{*}(.)) ,\\
(x,m) & \mapsto g(x,m)\text{,}%
\end{array}
\]
are concave  for each $t \in [0,T]$ almost surely.
\item (Maximum condition)%
\begin{center}
$H(t, X^{*}(t),X^{*}(t-\delta), M^{*}(t),M^{*}(t-\delta),u^{*}(t), p^{*}(t),q^{*}(.)) 
= \hspace{2cm}\underset{u\in \mathcal{U}}{\text{ }\sup} H(t, X^{*}(t),X^{*}(t-\delta), M^{*}(t),M^{*}(t-\delta),u, p^{*}(t),q^{*}(.)) $
\label{maxQ}%
\end{center}
for all $t \in [0,T]$ almost surely.
\end{enumerate}

Then $(u^{\ast},X^{\ast})$ is an optimal couple for our problem.
\end{thm}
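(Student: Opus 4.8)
The plan is to fix an arbitrary admissible control $u\in\mathcal{A}_{\mathbb{F}}$ with associated state $X=X^{u}$, set $\Delta X:=X-X^{\ast}$, and show that $J(u)-J(u^{\ast})\le0$; since $u$ is arbitrary this gives $J(u^{\ast})=\sup_{u}J(u)$. Note that $\Delta X(t)=0$ on $[-\delta,0]$ and that $\Delta X$ solves $d\Delta X(t)=\Delta b(t)\,dt+\Delta\sigma(t)\,dB^{H}_{t}$ with $\Delta b(t):=b(t,X(t),X(t-\delta),M(t),M(t-\delta),u(t))-b^{\ast}(t)$ and $\Delta\sigma(t):=\sigma(t,M(t),M(t-\delta))-\sigma^{\ast}(t)$; crucially $\Delta\sigma$ is deterministic, so the second part of Proposition \ref{integ.part.form} applies. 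I would split $J(u)-J(u^{\ast})=\E[\Delta g]+\E[\int_{0}^{T}\Delta f(t)\,dt]$ and treat the two pieces separately.

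First I would bound the terminal term. By the assumed concavity \eqref{concavity} of $(x,m)\mapsto g(x,m)$, one has $\Delta g\le \partial_{x}g^{\ast}(T)\Delta X(T)+\tilde{\E}[\partial_{m}g^{\ast}(T)(\tilde{X}^{\ast}(T))\Delta\tilde{X}(T)]$. Taking $\E$ and using the independent-copy (Fubini-type) exchange between the tilde and non-tilde spaces to rewrite the measure-derivative term, the terminal condition $p^{\ast}(T)=\partial_{x}g^{\ast}(T)+\tilde{\E}[\partial_{m}\tilde{g}^{\ast}(T)(X^{\ast}(T))]$ yields $\E[\Delta g]\le\E[p^{\ast}(T)\Delta X(T)]$. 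Next I would expand $\E[p^{\ast}(T)\Delta X(T)]$ by Proposition \ref{integ.part.form}(2), inserting the adjoint dynamics \eqref{eq:2} for $dp^{\ast}$ and $d\Delta X$ above; the two stochastic integrals have zero mean thanks to the integrability hypotheses $\Delta X\,q^{\ast}\in\mathcal{L}_{H}^{1,2}$ and $p^{\ast}\sigma\in\mathcal{L}_{H}^{1,2}$ in assumption 1.

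I would then express $\Delta f$ through the Hamiltonian \eqref{ham}, writing $f=H-p^{\ast}b-\sigma\int_{0}^{T}q^{\ast}(s)\varphi_{H}(s,\cdot)\,ds$, so that $\E[\int_{0}^{T}\Delta f\,dt]$ produces $\E[\int_{0}^{T}(H(t,X(t),\dots,u(t),p^{\ast},q^{\ast})-H^{\ast}(t))\,dt]$ minus the drift and $\sigma$--$q^{\ast}$ contributions. Adding the two pieces, the $p^{\ast}\Delta b$ terms cancel, and the two double integrals $\int_{0}^{T}\int_{0}^{T}q^{\ast}(t)\Delta\sigma(s)\varphi_{H}(t,s)\,ds\,dt$ cancel after renaming variables and using the symmetry $\varphi_{H}(t,s)=\varphi_{H}(s,t)$. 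This leaves $J(u)-J(u^{\ast})\le\E[\int_{0}^{T}(H(t,\dots,u(t),\dots)-H^{\ast}(t))\,dt]-\E[\int_{0}^{T}\Delta X(t)\,\alpha^{\ast}(t)\,dt]$, where $\alpha^{\ast}$ denotes the drift coefficient of \eqref{eq:2}. I would then invoke the joint concavity \eqref{concavity} of $H$ in $(x,\bar{x},m,\bar{m},u)$ to bound $H(t,\dots,u(t),\dots)-H^{\ast}(t)$ above by its supergradient $\partial_{x}H^{\ast}\Delta X(t)+\partial_{\bar{x}}H^{\ast}\Delta X(t-\delta)+\tilde{\E}[\partial_{m}H^{\ast}(\tilde{X}^{\ast}(t))\Delta\tilde{X}(t)]+\tilde{\E}[\partial_{\bar{m}}H^{\ast}(\tilde{X}^{\ast}(t-\delta))\Delta\tilde{X}(t-\delta)]+\partial_{u}H^{\ast}(u(t)-u^{\ast}(t))$.

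The main obstacle is to verify that, after integration, all the state-gradient terms in this supergradient reproduce exactly $\E[\int_{0}^{T}\Delta X(t)\,\alpha^{\ast}(t)\,dt]$ and hence cancel. This is the same bookkeeping as in the derivation of \eqref{eq:2}: for the $\bar{x}$- and $\bar{m}$-terms one performs the shift $t\mapsto t+\delta$ and uses $\Delta X\equiv0$ on $[-\delta,0]$, which generates the factors $\chi_{[0,T-\delta]}$ and the advanced arguments $t+\delta$; for the $\bar{x}$-term one additionally uses the tower property (since $\Delta X(t)$ is $\mathcal{F}_{t}$-measurable) to produce the conditional expectation $\E[\cdot\mid\mathcal{F}_{t}]$; and for the $m$- and $\bar{m}$-terms one uses the tilde/non-tilde exchange to convert the derivatives into the forms $\tilde{\E}[\partial_{m}\tilde{H}^{\ast}(t)(X^{\ast}(t))]$ appearing in \eqref{eq:2}. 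Once this cancellation is carried out, one is left with $J(u)-J(u^{\ast})\le\E[\int_{0}^{T}\partial_{u}H^{\ast}(t)(u(t)-u^{\ast}(t))\,dt]$. Finally, the maximum condition (assumption 3) states that $u^{\ast}(t)$ maximizes $u\mapsto H(t,X^{\ast}(t),\dots,u,p^{\ast}(t),q^{\ast}(\cdot))$ over the convex set $\mathcal{U}$; with differentiability of $H$ in $u$ this gives the variational inequality $\partial_{u}H^{\ast}(t)(u(t)-u^{\ast}(t))\le0$ for a.e. $t$, whence $J(u)-J(u^{\ast})\le0$ and $(u^{\ast},X^{\ast})$ is optimal.
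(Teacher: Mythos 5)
Your proposal follows essentially the same route as the paper's proof: the same decomposition $J(u)-J(u^{\ast})=\E[\Delta g]+\E[\int_{0}^{T}\Delta f\,dt]$, the concavity of $g$ plus the terminal condition to get $\E[\Delta g]\le\E[p^{\ast}(T)\Delta X(T)]$, integration by parts via Proposition \ref{integ.part.form} (correctly in its deterministic-$\sigma$ form), the time-shift $t\mapsto t+\delta$ with $\Delta X\equiv 0$ on $[-\delta,0]$ and the tilde/non-tilde exchange to match the adjoint drift, and finally the joint concavity of $H$ together with the maximum condition. The only (harmless) difference is that you state the last step as the variational inequality $\partial_{u}H^{\ast}(t)(u(t)-u^{\ast}(t))\le 0$ over the convex set $\mathcal{U}$, which is in fact slightly more careful than the paper's remark that the first-order derivative in $u^{\ast}$ vanishes.
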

\begin{proof}
Let $u\in\mathcal{A}_{\mathbb{F}}$ be a generic admissible control, and $X = X^{u}$ the corresponding controlled state process. By the definition of the performance functional $J$ given by 
$\left(  \ref{P}\right)  $, we have 

\begin{equation}%
\begin{array}
[c]{lll}%
J(u)-J(u^{\ast}) & = & A_{2} + A_{3},
\end{array}
\label{J2}%
\end{equation}
where
\begin{align*}
&
\begin{array}
[c]{lll}%
A_{2} & := & \mathbb{E[}%
{\textstyle\int_{0}^{T}}
\{f(t)-f^{\ast}(t)\}dt],
\end{array}
\\
&
\begin{array}
[c]{lll}%
A_{3} & := & \mathbb{E[}g(T)-g^{\ast}(T)].
\end{array}
\end{align*}
Applying the definition of the Hamiltonian $\left(  \ref{ham}\right)  $, we
have%
\begin{equation}%
\begin{array}
[c]{ccc}%
A_{2} & = & \mathbb{E[}%
{\textstyle\int_{0}^{T}}
\{H(t)-H^{\ast}(t)-p^{*}(t){b}^{\prime}(t)-%
{\textstyle\int_{0}^{T}}
q^{*}(s){\sigma}^{\prime}(t)\varphi_{H}(s,t)ds\}dt]
\end{array}
\label{j1}%
\end{equation}
where we used the following notations
\begin{center}
\[%
\begin{array}
[c]{ll}%
b(t) & := b(t,X(t),X(t-\delta), M(t),M(t-\delta),u(t)), \\
b^{*}(t) & := b(t,X^{*}(t),X^{*}(t-\delta), M^{*}(t),M^{*}(t-\delta),u^{*}(t)), \\
f(t) &: = f(t,X(t),X(t-\delta), M(t),M(t-\delta),u(t)), \\
f^{*}(t) & := f(t,X^{*}(t),X^{*}(t-\delta), M^{*}(t),M^{*}(t-\delta),u^{*}(t)), \\
g(T) & := g(X(T), M(T)), \text{ \ }  g^{*}(T)  := g(X^{*}(T), M^{*}(T)), \\
\sigma(t) & := \sigma(t,M(t),M(t-\delta)), \text{ \ }\sigma^{*}(t) := \sigma(t, M^{*}(t), M^{*}(t-\delta)),\\
H(t) & := H(t,X(t),X(t-\delta),M(t),M(t-\delta),u(t),p^{*}(t),q^{*}(.)), \\
H^{*}(t) & := H(t,X^{*}(t),X^{*}(t-\delta),M^{*}(t),M^{*}(t-\delta),u^{*}(t),p^{*}(t),q^{*}(.)), \\
{b}^{\prime}(t) & :=  b(t)-b^{\ast}(t),\\
{\sigma}^{\prime}(t) & := \sigma(t)-\sigma^{\ast}(t)\\
{X}^{\prime}(t) & := X(t)- X^{\ast}(t) 
\end{array}
\]
\end{center}
Now using the concavity of $g$ and the terminal value of the BSDE (\ref{eq:2}) associated to  $(u^{*},X^{*})$, we get by Fubini's theorem
\begin{align*}
A_{3} & \leq \mathbb{E}[\partial_{x}g^{*}(T){X}^{\prime}(T)]  + \mathbb{E}[\tilde{\mathbb{E}}[\partial_{m}g^{*}(T)(\tilde{X}(T))\tilde{X}^{\prime}(T)]]\\
& = \mathbb{E}[(\partial_{x}g^{*}(T) + \tilde{\mathbb{E}}[\partial_{m}\tilde{g}^{*}(T)({X}(T))]){X}^{\prime}(T)] \\
& = \mathbb{E}[p^{*}(T){X}^{\prime}(T)] 
\end{align*}
Applying the integration by parts formula to $p^{*}(t)$ and $X^{\prime}(t)$, we get
\begin{align*}
\mathbb{E}[p^{*}(T)X^{\prime}(T)]& = \mathbb{E}[\int_{0}^{T}p^{*}(t)dX^{\prime}(t)] + \mathbb{E}[\int_{0}^{T}X^{\prime}(t) dp^{*}(t)] + 
\mathbb{E}[\int_{0}^{T}\int_{0}^{T}q^{*}(s)\sigma^{\prime}(t) \varphi_{H}(t,s)ds dt] \\
& = \mathbb{E}[\int_{0}^{T}p^{*}(t)b^{\prime}(t)dt]- \mathbb{E}[\int_{0}^{T} X^{\prime}(t)\{\partial_{x}{H}^{*}(t)  +
 \partial_{\bar{x}}H^{*}(t+\delta) \chi_{[0,T-\delta]}(t) \\
& \hspace*{0.5cm}  +  \mathbb{\tilde{E}}[\partial_{m}\tilde{H}^{*}(t)({X}^{*}(t))] +  \mathbb{\tilde{E}}[\partial_{\bar{m}}\tilde{H}^{*}(t)({X}^{*}(t))]\chi_{[0,T-\delta]}(t)\}dt]  \\
& \hspace*{0.5cm} + \mathbb{E}[\int_{0}^{T}\int_{0}^{T}q^{*}(s)\sigma^{\prime}(t) \varphi_{H}(t,s)dtds].
\end{align*}
Note that by the change of variables $r = t + \delta$, we have 
\begin{equation*}
\mathbb{E}[\int_{0}^{T-\delta} {X}^{\prime}(t) \partial_{\bar{x}}H^{*}(t+\delta)dt]  = \mathbb{E}[\int_{\delta}^{T} {X}^{\prime}(r-\delta) \partial_{\bar{x}}H^{*}(r)dr] = \mathbb{E}[\int_{0}^{T} {X}^{\prime}(r-\delta) \partial_{\bar{x}}H^{*}(r)dr] 
\end{equation*}
where we used that $\mathbb{E}[\int_{0}^{\delta} {X}^{\prime}(r-\delta) \partial_{\bar{x}}H^{*}(r)dr] = \mathbb{E}[\int_{- \delta}^{0} {X}^{\prime}(u) \partial_{\bar{x}}H^{*}(u+\delta)du] = 0$ since ${X}^{\prime}(u) = 0$ for all $u \in [-\delta,0]$, because $X^{*}(t) = X(t) = x_{0}(t)$ for all $t  \in [-\delta,0]$. \\
\\
Similarly, we get using the previous argument and by Fubini's theorem
\begin{equation*}
\mathbb{E}[\int_{0}^{T} X^{\prime}(t) \mathbb{\tilde{E}}[\partial_{\bar{m}} \tilde{H}^{*}(t+\delta)(X^{*}(t))]\chi_{[0,T-\delta]}(t)dt] = \mathbb{E}[\int_{0}^{T}\mathbb{\tilde{E}}[\partial_{\bar{m}}{H}^{*}(t)(\tilde{X}^{*}(t-\delta))\tilde{X}^{\prime}(t-\delta)]dt]
\end{equation*}
Hence, we have
\begin{align}\label{PXprim}
\mathbb{E}[p^{*}(T)X^{\prime}(T)] & = \mathbb{E}[\int_{0}^{T}p^{*}(t)b^{\prime}(t)dt]- \mathbb{E}[\int_{0}^{T} X^{\prime}(t)\partial_{x}H^{*}(t) dt] - 
\mathbb{E}[\int_{0}^{T} \partial_{\bar{x}}H^{*}(t) X^{\prime}(t-\delta) dt] \nonumber \\
& \hspace*{0.5cm}  - \mathbb{E}[\int_{0}^{T} \mathbb{\tilde{E}}[\partial_{m}H^{*}(t)(\tilde{X}^{*}(t))\tilde{X}^{\prime}(t)]dt]- \mathbb{E}[\int_{0}^{T} \mathbb{\tilde{E}}[\partial_{\bar{m}}H^{*}(t)(\tilde{X}^{*}(t-\delta))\tilde{X}^{\prime}(t-\delta)]dt] \nonumber \\
& \hspace*{0.5cm} + \mathbb{E}[\int_{0}^{T}\int_{0}^{T}q^{*}(s)\sigma^{\prime}(t) \varphi_{H}(t,s)dtds].
\end{align}
By (\ref{J2}), (\ref{j1}) and (\ref{PXprim}), we obtain
\begin{align*}
J(u)-J(u^{*}) & \leq \mathbb{E}[\int_{0}^{T} (H(t)-H^{*}(t))dt] - \mathbb{E}[\int_{0}^{T} \partial_{x}H^{*}(t) {X}^{\prime}(t)dt] - \mathbb{E}[\int_{0}^{T} \partial_{\bar{x}}H^{*}(t) {X}^{\prime}(t-\delta)dt]\\
& \hspace{0.8cm} - \mathbb{E}[\int_{0}^{T} \mathbb{\tilde{E}}[\partial_{m}H^{*}(t)(\tilde{X}^{*}(t))\tilde{X}^{\prime}(t)]dt]- \mathbb{E}[\int_{0}^{T} \mathbb{\tilde{E}}[\partial_{\bar{m}}H^{*}(t)(\tilde{X}^{*}(t-\delta))\tilde{X}^{\prime}(t-\delta)]dt]. \\
& \leq 0.
\end{align*}
due to the concavity assumption on $H$ and because $u^{*}$ satisfies the maximum condition, the first order derivative in $u^{*}$ vanishes.
\end{proof}

\section{Applications}
The main applications of mean-field dynamics that appear in the literature rely mainly on a dependence upon the probability measures through functions of scalar moments of the measures. More precisely, we assume that:
\begin{equation*}
b(t,x,\bar{x},m,\bar{m},u) = \hat{b}(t,x,\bar{x},(\psi_{1},m),(\psi_{2},\bar{m}),u),
\end{equation*}
\begin{equation*}
\sigma(t,m,\bar{m}) = \hat{\sigma}(t,(\phi_{1},m),(\phi_{2},\bar{m})),
\end{equation*}
\begin{equation*}
f(t,x,\bar{x},m,\bar{m},u) = \hat{f}(t,x,\bar{x},(\gamma_{1},m),(\gamma_{2},\bar{m}),u),
\end{equation*}
\begin{equation*}
g(x,m) = \hat{g}(x,(\gamma_{3},m)).
\end{equation*}
for some scalar functions $\psi_{1}$, $\psi_{2}$, $\phi_{1}$, $\phi_{2}$, $\gamma_{1}$, $\gamma_{2}$, $\gamma_{3}$ with at most quadratic growth at $\infty$. The functions $\hat{b}$, $\hat{f}$ are defined on $[0,T] \times \mathbb{R}\times \mathbb{R} \times \mathbb{R}\times \mathbb{R} \times \mathcal{U}$, the function $\hat{\sigma}$ is defined on $[0,T] \times \mathbb{R} \times \mathbb{R}$ and $\hat{g}$ is defined on $\mathbb{R} \times \mathbb{R}$. The notation $(\psi, m)$ denotes the integral of the function $\psi$ with respect to the probability measure $m$. The Hamiltonian that we defined in the previous section takes now the following form:
\begin{equation*}%
\begin{array}
[c]{ll}%
H(t,x,\overline{x},m,\overline{m},u,p,q(.)) & = \hat{f}(t,x,\overline
{x},(\gamma_{1},m),(\gamma_{2},\bar{m}),u)+p\text{\ }\hat{b}(t,x,\overline{x},(\psi_{1},m),(\psi_{2},\bar{m}),u)  \\
& \hspace{0.5cm}%
+ \hat{\sigma}(t,(\phi_{1},m),(\phi_{2},\bar{m})){\textstyle\int_{0}^{T}}
q(s)\varphi_{H}(s,t)ds.
\end{array}
\label{ham}%
\end{equation*}
According to the definition of the differentiability with respect to  functions of measures recalled in the preliminaries, the derivative of the Hamiltonian with respect to the measure $m$ for instance, is computed as follows :
\begin{align*}
\partial_{m}H(t,x,\overline{x},m,\overline{m},u,p,q(.))(x^{\prime}) & = \partial_{x^{\prime}} \hat{f} (t,x,\bar{x},(\gamma_{1},m),(\gamma_{2},\bar{m}),u) \gamma_{1}^{\prime}(x^{\prime})  \\
& \hspace{0.8cm}+ p \times \partial_{x^{\prime}} \hat{b} (t,x,\bar{x},(\psi_{1},m),(\psi_{2},\bar{m}),u) \psi_{1}^{\prime}(x^{\prime}) \\
& \hspace{0.8cm} + \partial_{x^{\prime}} \hat{\sigma} (t,(\phi_{1},m),(\phi_{2},\bar{m})) \phi_{1}^{\prime}(x^{\prime}){\textstyle\int_{0}^{T}}
q(s)\varphi_{H}(s,t)ds.
\end{align*}
The terminal value of the adjoint BSDE(\ref{eq:2}) which is $p(T)  = \partial_{x}g(T)+ \mathbb{\tilde{E}}[\partial_{m}\tilde{g}(T)({X}(T))]$, can be written in terms of the derivatives of the function $\hat{g}$ as follows: 
\begin{equation*}
p(T)  = \partial_{x}\hat{g}(X_{T},\mathbb{E}[\gamma_{3}(X_{T})])+ \mathbb{\tilde{E}}[\partial_{x^{\prime}}\hat{g}(\tilde{X}_{T},\mathbb{E}[\gamma_{3}(X_{T}))]\gamma_{3}^{\prime}(X_{T})
\end{equation*} 
where $\tilde{X}_{T}$ is an independant copy of $X_{T}$. We study in the following two applications that illustrate the previous results.
\subsection{Optimal consumption from a cash flow with delay}
We consider the problem of an optimal consumption  with a cash flow with delay $X := X^{\rho}$ given by 
\begin{equation}\label{wealth}
\left\{
\begin{array}
[c]{lll}%
dX(t) & =  [X(t-\delta)-\rho(t)]dt + \beta(t)dB^{H}(t) , \text{ \ } t \in [0,T],\\
X(t) & = x_{0}(t)  \text{ \ } t \in [-\delta,0]. 
\end{array}
\right.  %
\end{equation}
where $\rho$ is the relative consumption rate (our control), $x_{0}$ a bounded deterministic function, $\delta$ a strictly positive constant and $\beta$ is a given deterministic function in $\mathbb{L}_{H}^{2}([0,T])$. The integral with respect to the fBm is therefore a Wiener type integral.
 \\
\\
The problem we consider is to find the consumption rate $\rho^{*}$ such that
\begin{equation*}
J(\rho^{*}) = \sup_{\rho \in \mathcal{\bar{A}}_{\mathbb{F}}} J(\rho)
\end{equation*} 
where
\begin{equation*}
J(\rho) = \mathbb{E}[\int_{0}^{T} \log(\rho(t)) dt + \xi_{1} \mathbb{E}[X(T)]],
\end{equation*}
over the set $\mathcal{\bar{A}}_{\mathbb{F}}$ of admissible controls which are $\mathbb{F}$-adapted processes with values in $\mathbb{R}^{*}_{+}$, $\xi_{1} > 0$ is a given bounded $\mathcal{F}_{T}$-measurable random variable assumed to be in ${\mathcal{L}}_{H}^{1,2}([0,T])$, we also assume that $X$ exists and belongs to $L^{2}(\Omega \times [0,T])$.\\

Note that the running cost functional we consider in this example is the function $\rho(t) \mapsto \log(\rho(t))$ which is a utility function. Moreover, in order to control the fluctuations of the terminal time-value of the cash flow $X^{\rho}_{T}$, we chose to introduce its mean in the terminal cost functional. Therefore according to the notations used in the previous paragraph, the terminal cost functional is of mean-field type, more precisely it has the following form:
\begin{equation*}
g(X_{T}, \mathbb{P}_{X_{T}}) = g(\mathbb{P}_{X_{T}})= \xi_{1} \mathbb{E}[X_{T}]=\hat{g}_{\xi_{1}}((\textsc{Id},\mathbb{P}_{X_{T}}))
\end{equation*}
where $\hat{g}_{\xi_{1}}(x^{\prime})= \xi_{1} x^{\prime}$, therefore $\hat{g}^{\prime}_{\xi_{1}}(x^{\prime})= \xi_{1} $  a.s.\\
\\
The Hamiltonian of this control problem  is given by :
\begin{equation*}
H(t,x,\bar{x},\rho,p,q(.)) = \log(\rho) + (\bar{x} - \rho)p + \beta(t) \int_{0}^{T}q(s) \varphi_{H}(s,t)ds,
\end{equation*}
where $(p,q)$ is the solution of the associated adjoint BSDE 
\begin{equation}\label{bsde}
\left\{
\begin{array}
[c]{lll}%
dp(t) & =  - \mathbb{E}[p(t+\delta)\chi_{[0,T-\delta]}(t)|\mathcal{F}_{t}] dt + q(t) dB^{H}(t) , \text{ \ } t \in [0,T],\\
p(T) & =  \xi_{1}.
\end{array}
\right.  %
\end{equation}
Inspired by the resolution of the linear BSDE driven by a fractional Brownian motion with Hurst parameter $H > 1/2$ done in \cite{HOB} and \cite{HP}, we propose a resolution of the anticipated BSDE (\ref{bsde}) by solving  a sequence of linear BSDEs following this procedure :
\\
\\
\textbf{Step 1 } If $t \in [T-\delta,T]$, the previous BSDE takes the form 
\begin{equation*}
\left\{
\begin{array}
[c]{lll}%
dp(t) & =  q(t) dB^{H}(t) , \text{ \ } t \in  [T-\delta,T],\\
p(T) & =  \xi_{1}.
\end{array}
\right.  %
\end{equation*}
which has the solution 
\begin{equation*}
p(t) = \hat{\mathbb{E}}[\xi_{1} | \mathcal{F}_{t}],  \text{ \ } q(t) =  \hat{\mathbb{E}}[D^{H}_{t}\xi_{1} | \mathcal{F}_{t}], \text{ \ }t \in  [T-\delta,T]
\end{equation*}
where $\hat{\mathbb{E}}$ is the quasi-conditional expectation, see \cite{BHOZ} for its definition.\\
\\
\textbf{Step 2 :} If $t \in [T-2\delta, T-\delta]$ and $T- 2 \delta > 0$, we obtain the BSDE 
\begin{equation*}
\left\{
\begin{array}
[c]{lll}%
dp(t) & =  - \mathbb{E}[p(t+\delta)|\mathcal{F}_{t}] + q(t) dB^{H}(t) , \text{ \ } t \in  [T- 2\delta,T - \delta],\\
p(T- \delta)& \text{ \ } \text{known from step 1}.
\end{array}
\right.  %
\end{equation*}
this BSDE has an expicit solution given by 
\begin{equation*}
p(t) = \hat{\mathbb{E}}[p(T-\delta) + \int_{t}^{T} \theta(s) dt | \mathcal{F}_{t}], \text{ \ } q(t) = D^{H}_{t}p(t), \text{ \ } t \in  [T- 2\delta,T - \delta].
\end{equation*}
where $\theta(t) = \mathbb{E}[p(t+\delta)|\mathcal{F}_{t}]$ and $p(t+\delta)$ is known by step 1. \\
\\
We continue like this by induction up to and including step n, where n is such that $T - n\delta \leq 0 < T- (n-1)\delta$ and we solve the corresponding BSDE on the time interval $[0, T-(n-1)\delta]$ and we solve the corresponding BSDE on the time interval $[0, T-(n-1)\delta]$. \\
\\ 
Maximizing H with respect to $\rho$ gives the following first order condition for an optimal consumption rate $\rho^{*}$ :

\begin{equation*}
{\partial_{\rho} H^{*}(t)} = \frac{1}{\rho^{*}(t)} - p(t) = 0,
\end{equation*}
Then if \begin{equation}\label{condition}
 p(t) > 0  \text{ \ for all } t \in [0,T].
\end{equation}
 We get 
\begin{equation}\label{optimal-consumption}
\rho^{*}(t) = \frac{1}{p(t)} \text{ \ for all } t \in [0,T].
\end{equation}
where $p$ satisfies the previous anticipated BSDE.
\begin{thm}
Let $(p,q)$ be the solution of the BSDE (\ref{bsde}) and suppose that (\ref{condition}) holds. Then any optimal consumption rate $\rho^{*}$ satisfies (\ref{optimal-consumption}) and the corresponding optimal wealth $X^{*}$ is given by equation (\ref{wealth}).
\end{thm}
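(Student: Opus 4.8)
The plan is to obtain the result as a direct application of the sufficient maximum principle, Theorem \ref{suff.cond}, to the candidate pair $(\rho^{*},X^{*})$ with $\rho^{*}(t)=1/p(t)$, and then to use strict concavity to upgrade optimality into the pointwise characterization claimed for \emph{any} optimal rate. First I would record the structural simplifications special to this example: the drift $b(t,x,\bar{x},m,\bar{m},\rho)=\bar{x}-\rho$ is affine and measure independent, the diffusion $\sigma(t,m,\bar{m})=\beta(t)$ is deterministic, the running cost $f=\log(\rho)$ depends only on the control, and the terminal cost $g(x,m)=\xi_{1}\int_{\R}y\,m(dy)$ is affine in the measure and constant in $x$. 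Hence the Hamiltonian reduces to $H(t,x,\bar{x},\rho,p,q(\cdot))=\log(\rho)+(\bar{x}-\rho)p+\beta(t)\int_{0}^{T}q(s)\varphi_{H}(s,t)\,ds$, and the adjoint equation (\ref{eq:2}) collapses exactly to the anticipated linear BSDE (\ref{bsde}), whose solution $(p,q)$ is furnished by the recursive construction of Steps $1$ through $n$.

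Next I would verify the concavity hypothesis of Theorem \ref{suff.cond}. The map $(x,\bar{x},m,\bar{m},\rho)\mapsto H$ is constant in $x$, $m$, $\bar{m}$, affine in $\bar{x}$, and strictly concave in $\rho$ since $\partial_{\rho}^{2}H=-1/\rho^{2}<0$ on $(0,\infty)$; being a sum of an affine part in separated variables and a strictly concave part in $\rho$, it is jointly concave for each $t$ almost surely. Likewise $g(x,m)=\xi_{1}\int_{\R}y\,m(dy)$ is affine, hence concave, in $(x,m)$: checking the defining inequality (\ref{concavity}) one finds $\partial_{x}g=0$ and $\partial_{m}g(x,m)(y)=\xi_{1}$, so the left side collapses identically to zero. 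For the integrability requirement I would invoke the standing assumptions that $\beta\in\mathbb{L}_{H}^{2}([0,T])$ is deterministic and $\xi_{1}\in\mathcal{L}_{H}^{1,2}([0,T])$, so that $\beta(t)p^{*}(t)$ and $X^{u}(t)q^{*}(t)$ lie in $\mathcal{L}_{H}^{1,2}([0,T])$ for every admissible $\rho$, which is what licenses the integration by parts formula of Proposition \ref{integ.part.form} used inside the proof of Theorem \ref{suff.cond}.

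The maximum condition is where hypothesis (\ref{condition}) enters. Since $H$ is strictly concave in $\rho$ on $(0,\infty)$, its supremum over the admissible range $\mathbb{R}^{*}_{+}$ is attained at the unique stationary point solving $\partial_{\rho}H=1/\rho-p=0$, namely $\rho=1/p(t)$. This point belongs to $\mathbb{R}^{*}_{+}$ precisely when $p(t)>0$, which is exactly (\ref{condition}); moreover $\rho^{*}(t)=1/p(t)$ is $\mathbb{F}$-adapted because $p$ is, so $\rho^{*}\in\bar{\mathcal{A}}_{\mathbb{F}}$. With all three hypotheses of Theorem \ref{suff.cond} in force, the theorem yields that $(\rho^{*},X^{*})$ is optimal, with $X^{*}$ the solution of (\ref{wealth}) driven by $\rho^{*}$. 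For the \emph{any optimal} phrasing I would note that strict concavity makes the maximizer of $H$ in $\rho$ unique; consequently any control satisfying the maximum condition (equivalently, any optimal control via the necessary principle of Theorem 4.1) must coincide pointwise with $1/p$, giving (\ref{optimal-consumption}).

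The main obstacle I anticipate lies not in the concavity or maximization steps, which are routine given the explicit separable form of $H$, but in the integrability and admissibility bookkeeping: one must confirm that the recursively constructed $(p,q)$ genuinely satisfies $(p,q)\in\mathcal{L}_{H}^{1,2}([0,T])\times\mathcal{L}_{H}^{1,2}([0,T])$ and that $1/p$ is square integrable and strictly positive so that $\rho^{*}\in\bar{\mathcal{A}}_{\mathbb{F}}$. Strict positivity is imposed directly as (\ref{condition}) (and is consistent with $p(T)=\xi_{1}>0$), while the required regularity should be traced back through the quasi-conditional-expectation representations $p(t)=\hat{\mathbb{E}}[\cdots\mid\mathcal{F}_{t}]$ and $q(t)=D_{t}^{H}p(t)$ obtained at each stage of the induction.
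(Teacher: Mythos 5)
Your proposal is correct and follows essentially the same route as the paper: the paper gives no separate proof beyond the derivation preceding the theorem, which likewise specializes the Hamiltonian and adjoint equation to this example, solves the anticipated BSDE recursively, and reads off $\rho^{*}(t)=1/p(t)$ from the first-order condition $\partial_{\rho}H^{*}(t)=1/\rho^{*}(t)-p(t)=0$ under the positivity condition. Your explicit verification of the concavity, maximum-condition and integrability hypotheses of the sufficient maximum principle, and the use of strict concavity in $\rho$ to justify the ``any optimal rate'' phrasing, is more careful than what the paper records but is the same argument in substance.
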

\subsection{Linear-Quadratique Problem with delay}
We consider now a Linear-Quadratique (LQ) model for a controlled process $X = X^{\alpha}$ given by the following delayed stochastic differential equation: 
\begin{equation}\label{wealth1}
\left\{
\begin{array}
[c]{lll}%
dX(t) & =  [\beta_{1}(t)X(t-\delta)+ \alpha(t)]dt + \beta_{2}(t)dB^{H}(t) , \text{ \ } t \in [0,T],\\
X(t) & = x_{0}(t)  \text{ \ } t \in [-\delta,0]. 
\end{array}
\right.  %
\end{equation}

where $\delta > 0$ is a given constant, $\beta_{1}$, $x_{0}$ are given bounded deterministic functions, $\beta_{2}$ is a given deterministic function in $\mathbb{L}_{H}^{2}([0,T])$. The integral with respect to the fBm is therefore a Wiener type integral and $\alpha \in \mathcal{A}_{\mathbb{F}}$ is our control process, the set $\mathcal{A}_{\mathbb{F}}$ are the admissible controls assumed to be square integrable $\mathbb{F}$-adapted processes with real values.
\\
\\
We want to minimize the expected value of $(X_{T}-\mathbb{E}[X_{T}])^{2}$ which is the variance of $X_{T}$ with a minimal average use of energy, measured by the integral $\mathbb{E}[\int_{0}^{T}\alpha^{2}(t)dt]$, more precisely, the performance functional is of the following form:
\begin{equation}\label{LQJ}
J(\alpha) = - \frac{1}{2} (\textsc{Var}(X_{T}) + \mathbb{E}[ \int_{0}^{T}\alpha^{2}(t)dt]).
\end{equation}
Our goal is therefore to find the control process $\alpha^{*} \in \mathcal{A}_{\mathbb{F}}$, such that 
\begin{equation}\label{LQ}
J(\alpha^{*}) = \sup_{\alpha \in \mathcal{A}_{\mathbb{F}}} J(\alpha)
\end{equation}
\begin{rem}
Including the variance of the state process in the cost functional in order to keep it small is a way to control its sensitivity with respect to the possible variations of the random events. The form of this cost functional is inspired from \cite{JY}.
\end{rem}

Note that the terminal cost functional of our problem has the following form:
\begin{equation*}
g(X_{T}, \mathbb{P}_{X_{T}}) = \hat{g}(X_{T},(\textsc{Id},\mathbb{P}_{X_{T}})) = - \frac{1}{2}(X_{T}-\mathbb{E}[X_{T}])^{2}
\end{equation*}
where $\hat{g}(x,x^{\prime}) = -\frac{1}{2}(x-x^{\prime})^{2}$, therefore $\partial_{x}\hat{g}(x,x^{\prime})  = -(x-x^{\prime}) =  - \partial_{x^{\prime}}\hat{g}(x,x^{\prime})$. Therefore the terminal value of the solution of the adjoint BSDE is:
\begin{align*}
p(T)  & = \partial_{x}\hat{g}(X_{T},(\textsc{Id},\mathbb{P}_{X_{T}}))+ \mathbb{\tilde{E}}[\partial_{x^{\prime}}\hat{g}(\tilde{X}_{T},(\textsc{Id},\mathbb{P}_{X_{T}}))] \\
& = -(X_{T}- \mathbb{E}[X_{T}]) + \tilde{\mathbb{E}}[\tilde{X}_{T}- \mathbb{E}[X_{T}]] \\
& = -(X_{T}- \mathbb{E}[X_{T}])
\end{align*}
where we used the fact that $\tilde{X}$ and $X$ have the same distribution. 
\\
\\
The Hamiltonian of our control problem takes the following form:
\begin{equation*}
H(t,x,\bar{x},\alpha,p,q(.)) = -\frac{1}{2} \alpha^{2} + (\beta_{1}(t)\bar{x} + \alpha)p + \beta_{2}(t) \int_{0}^{T}q(s) \varphi_{H}(s,t)ds,
\end{equation*}
where $(p,q)$ is the solution of the associated adjoint BSDE: 
\begin{equation*}
\left\{
\begin{array}
[c]{lll}%
dp(t) & =  - \beta_{1}(t+\delta)\mathbb{E}[p(t+\delta)\chi_{[0,T-\delta]}(t)|\mathcal{F}_{t}] dt + q(t) dB^{H}(t) , \text{ \ } t \in [0,T],\\
p(T) & = \xi_{2}(T).
\end{array}
\right.  %
\end{equation*}
where we put $\xi_{2}(T)= -(X_{T}-\mathbb{E}[X_{T}])$, we proceed now as we did in the previous exemple by solving a sequence of linear BSDEs.
\\
\\
The function $\alpha \mapsto H(t,X(t),X(t-\delta),\alpha(t),p(t),q(.))$ is maximal when
\begin{equation}\label{optimal}
\alpha(t) = \alpha^{*}(t) = p^{*}(t)
\end{equation}
where $p^{*}$ satisfies: 
\begin{equation}\label{Boptimal}
\left\{
\begin{array}
[c]{lll}%
dp^{*}(t) & =  - \beta_{1}(t+\delta)\mathbb{E}[p^{*}(t+\delta)\chi_{[0,T-\delta]}(t)|\mathcal{F}_{t}] dt + q^{*}(t) dB^{H}(t) , \text{ \ } t \in [0,T],\\
p(T) & = \xi_{2}^{*}(T).
\end{array}
\right.  %
\end{equation}
where $\xi_{2}^{*}(T)= -(X_{T}^{*}-\mathbb{E}[X_{T}^{*}])$. Therefore, we have proved the following theorem. 
\begin{thm}
The optimal control $\alpha^{*}$ of the LQ problem (\ref{LQ}) is given by (\ref{optimal}), where $(X^{*},p^{*},q^{*})$ solve the couple of systems (\ref{wealth1}) and (\ref{Boptimal}) of forward-backward stochastic differential equations.
\end{thm}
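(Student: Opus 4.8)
The plan is to verify that the triple $(\alpha^{*},X^{*},(p^{*},q^{*}))$ satisfies the three hypotheses of the sufficient maximum principle (Theorem \ref{suff.cond}) and then to invoke that theorem directly. The structural feature that makes this tractable is that here the mean-field dependence enters only through the terminal cost $g$, whereas $b$, $\sigma$ and $f$ depend on the laws trivially; this confines all measure-derivative bookkeeping to the terminal datum of the adjoint equation, and makes both the state dynamics (\ref{wealth1}) and the adjoint driver in (\ref{Boptimal}) linear.

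First I would establish the existence of a solution $(p^{*},q^{*})$ of the adjoint BSDE (\ref{Boptimal}). Since its driver is linear and anticipating with step $\delta$, I would solve it backwards on the successive intervals $[T-\delta,T]$, $[T-2\delta,T-\delta]$, and so on, exactly as in the recursive scheme of the optimal consumption subsection, at each stage reducing to a standard linear fractional BSDE whose solution is represented through the quasi-conditional expectation $\hat{\mathbb{E}}$. Coupling this with the forward equation (\ref{wealth1}) driven by $\alpha^{*}=p^{*}$ and with the terminal condition $\xi_{2}^{*}(T)=-(X_{T}^{*}-\mathbb{E}[X_{T}^{*}])$ produces precisely the forward-backward system asserted in the statement.

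Second I would check concavity. In the control variable the Hamiltonian equals $-\tfrac12\alpha^{2}+\alpha p$ plus a term not depending on $\alpha$, so $\partial_{\alpha}^{2}H=-1<0$ and $H$ is strictly concave in $\alpha$; it is affine in $\bar{x}$ and independent of $x,m,\bar{m}$, hence jointly concave in $(x,\bar{x},m,\bar{m},\alpha)$. For $g(x,m)=-\tfrac12(x-(\textsc{Id},m))^{2}$ I would compute $\partial_{x}g(x,m)=-(x-(\textsc{Id},m))$ and, via the chain rule for Lions derivatives of scalar-moment functionals used in the applications section, $\partial_{m}g(x,m)(y)=x-(\textsc{Id},m)$; substituting these into the concavity inequality (\ref{concavity}) and simplifying collapses its left-hand side to $-\tfrac12\bigl((x-(\textsc{Id},m))-(x'-(\textsc{Id},m'))\bigr)^{2}\le 0$, which gives the required concavity of $g$.

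Third, the maximum condition follows from the first-order condition $\partial_{\alpha}H=-\alpha+p=0$, whose unique root $\alpha^{*}(t)=p^{*}(t)$ is the global maximizer over $\mathcal{U}=\mathbb{R}$ by strict concavity; this is exactly (\ref{optimal}). The integrability hypotheses of Theorem \ref{suff.cond} reduce to $X^{u}(t)q^{*}(t)\in\mathcal{L}_{H}^{1,2}([0,T])$ and $p^{*}(t)\beta_{2}(t)\in\mathcal{L}_{H}^{1,2}([0,T])$, which I would read off from $\beta_{2}\in\mathbb{L}_{H}^{2}([0,T])$, the standing assumption $(p^{*},q^{*})\in\mathcal{L}_{H}^{1,2}([0,T])\times\mathcal{L}_{H}^{1,2}([0,T])$, and $X\in L^{2}(\Omega\times[0,T])$. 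With all three hypotheses verified, Theorem \ref{suff.cond} yields that $(\alpha^{*},X^{*})$ is optimal. The main obstacle is not any single estimate but the consistency of the coupled forward-backward system: the terminal condition $\xi_{2}^{*}$ depends on $X^{*}$, which is itself driven by $\alpha^{*}=p^{*}$, so one must argue that the recursive BSDE construction and the forward equation can be solved simultaneously; it is precisely the linearity of both the state dynamics and the adjoint driver that keeps this fixed-point problem manageable.
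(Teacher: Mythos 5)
Your proposal is correct and follows essentially the same route as the paper: the paper likewise computes the terminal condition $p(T)=-(X_{T}-\mathbb{E}[X_{T}])$ from the Lions derivative of $\hat{g}(x,x')=-\tfrac12(x-x')^{2}$, maximizes the Hamiltonian to obtain $\alpha^{*}(t)=p^{*}(t)$, and relies on the sufficient maximum principle together with the recursive interval-by-interval resolution of the anticipated adjoint BSDE. Your explicit verification of the concavity of $g$ in the measure variable and your remark on the fixed-point consistency of the coupled forward--backward system are details the paper leaves implicit, but they do not change the argument.
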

\section*{Acknowledgement}

The first author is supported by the Erasmus + International Credit mobility between Linnaeus University and Cadi Ayyad University for the academic year 2016-2017. The research of the third author is carried out with the support of the Norwegian Research
Council, within the research project Challenges in Stochastic Control,
Information and Applications (STOCONINF), project number 250768/F20.

\end{document}